\newtheorem{theorem}{Theorem}[section]
\newtheorem{lemma}[theorem]{Lemma}
\newtheorem{claim}[theorem]{Claim}
\newtheorem{remark}{Remark}
\newtheorem*{construction}{Construction}
\newtheorem{observation*}{Observation}[section]
\newtheorem{proposition}[theorem]{Proposition}
\newtheorem{question}[theorem]{Question}
\newcommand{\ex}{\mathrm{ex}}
\newenvironment{pf}{%
    \begin{proof}[Proof]
}{%
    
    \end{proof}
}
\newenvironment{mycase}[1]{%
    \par\addvspace{\medskipamount}%
    \noindent\textbf{#1}\quad%
}{%
    \par\addvspace{\medskipamount}%
}
\title{On generalized Tur\'an problems with bounded matching number}
 \author[1]{Yisai Xue}
 \affil[1]{School of Mathematics and Statistics, Ningbo University, Ningbo, China}
 \author[2,3]{Liying Kang\thanks{Research was partially supported by the National Nature Science Foundation of China (grant number 12331012)}\thanks{\textit{Corresponding author. Email address: lykang@shu.edu.cn (L. Kang)}}}
 \affil[2]{Department of Mathematics, Shanghai University, Shanghai 200444, P.R. China}
 \affil[3]{Newtouch Center for Mathematics of Shanghai University, Shanghai, China, 200444}
\date{}
\begin{document}

\maketitle

\begin{abstract}
  The generalized Tur\'an number $\mathrm{ex}(n, H, \mathcal{F})$ is defined as the maximum number of copies of a graph $H$ in an $n$-vertex graph that does not contain any graph $F \in \mathcal{F}$.
   Alon and Frankl initiated the study of Tur\'an problems with a bounded matching number.
   In this paper, we establish stability results for generalized Tur\'an problems with bounded matching number.
   Using the stability results, we provide exact values of $\ex(n,K_r,\{F,M_{s+1}\})$ for $F$ being any non-bipartite graph or  a path on $k$ vertices.

\bigskip

\noindent{\bf Keywords:}  generalized Tur\'an number, matching number
\medskip

\noindent{\bf AMS (2000) subject classification:}  05C35
\end{abstract}

\section{Introduction}

  The Tur\'an problem is a classic problem in extremal graph theory that concerns the maximum number of edges in a graph without any copies of a specified family.
  In 1941, Tur\'an proved that if a graph does not contain a complete subgraph $K_{k+1}$, then the maximum number of edges it can contain is given by the \textit{Tur\'an graph} $T_k(n)$, i.e., the complete balanced $k$-partite graph on $n$ vertices.
   Erd\H{o}s and Gallai \cite{gallai1959maximal} established another fundamental result by determining $\ex(n,M_{s+1})$, where $M_{s+1}$ is a matching consisting of $s+1$ independent edges.

	For two graphs $G$ and $H$, the {\em disjoint union} of $G$ and $H$ is denoted by $G\cup H$. The \textit{join} of $G$ and $H$, denoted by $G\vee H$, is the graph obtained from $G \cup H$ by adding all possible edges between $G$ and $H$.
   The study initiated by Alon and Frankl \cite{alon2024turan} focused on Tur\'an problems with a bounded matching number, laying the groundwork for our investigation.
   They proved that for $n \geq 2 s+1$ and $k \geq 2$,
   $$\ex(n,\{K_{k+1}, M_{s+1}\})=\max \{e(T_k(2s+1)),e(T_{k-1}(s)\vee I_{n-s})\},$$
   where $I_{n-s}$ is an independent set of size $n-s$.

   Later, Gerbner \cite{gerbner2024turan} investigated Tur\'an problems with arbitrarily forbidden subgraphs of chromatic number at least 3 and bounded matching number.

  Given a graph $H$ and a family of graphs $\mathcal{F}$, the maximum possible number of copies of $H$ in an $n$-vertex graph that does not contain any copy of $F \in \mathcal{F}$ is denoted by $\mathrm{ex}(n, H, \mathcal{F})$ and is called \textit{generalized Tur\'an number}.
  We denote by $\mathcal{N}(H, G)$ the number of copies of $H$ in $G$.
  Moreover, let $\mathrm{EX}(n, H,\mathcal{F})$ be the set of $n$-vertex $\mathcal{F}$-free graphs that satisfying $\mathcal{N}(H,G)=\ex(n, H,\mathcal{F})$.
  If $\mathcal{F}=\{F\}$, we simply denote it by $\ex(n, H, F)$.
  Note that when $H=K_2$, $\ex(n, K_2, F)$ is the classical Tur\'an number.
  The concept of the generalized Tur\'an number was formally introduced by Alon and Shikhelman \cite{alon2016many} in 2016, and further related results can be found in the literature \cite{gao2023counting,gerbner2020generalized,gishboliner2020generalized,luo2018maximum,xue2024stability}.

  Our first result concerns the case where the chromatic number of $F$ is at least 3.
  Given a graph $F$ with $\chi(F)=r \ge 3$, let $\mathcal{D}(F)$ be the set of all graphs that can be obtained from $F$ by deleting an independent set of $F$.

\begin{theorem}\label{thm:stability}
  Let $r\geq 3$ and $z>0$ be constants.
  Suppose $s$ is an integer and $F$ is a graph with $\chi(F)\ge r$.
  Assume there exists $n_0=n_0(s,r,z)$ such that for every graph $G$ on $n>n_0$ vertices satisfies that
\begin{align*}
  G \text{~is~} \{F,M_{s+1}\}\text{-free}, \text{~and\quad}  \mathcal{N}(K_r,G)\geq (n-s)\ex(s, K_{r-1},\mathcal{D}(F))- z n,
\end{align*}
  then the following holds.
\begin{itemize}
    \item[\rm (i)] If $\ex(s, K_{r-1},\mathcal{D}(F))-\ex(s-1, K_{r-1},\mathcal{D}(F))>z$, then $G$ has an independent set of size $n-s$.
    \item[\rm (ii)] Moreover, if $\ex(s, K_{r-1},\mathcal{D}(F))-\mathcal{N}(K_{r-1},H)> z$ holds for all $\mathcal{D}(F)$-free graphs $H$ with $s$ vertices that are not in $\mathrm{EX}(s, K_{r-1},\mathcal{D}(F))$, then $G$ is a subgraph of $D\vee I_{n-s}$ for some $D\in\mathrm{EX}(s, K_{r-1},\mathcal{D}(F))$.
\end{itemize}
\end{theorem}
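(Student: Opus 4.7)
The plan is to exploit the $M_{s+1}$-freeness via a maximum matching decomposition, reduce $\mathcal{N}(K_r,G)$ to Tur\'an-type counts in neighborhoods, and then read off (i) and (ii) from the respective gap hypotheses. Let $M$ be a maximum matching of $G$ with $|M|=t\le s$, $A=V(M)$, and $B=V(G)\setminus A$. Since unmatched vertices of a maximum matching are pairwise non-adjacent, $B$ is independent of size $n-2t\ge n-2s$; and since every copy of $K_r$ uses at most one vertex of $B$,
\[
\mathcal{N}(K_r,G)=\mathcal{N}(K_r,G[A])+\sum_{v\in B}\mathcal{N}(K_{r-1},G[N(v)]),
\]
with $\mathcal{N}(K_r,G[A])\le\binom{2s}{r}=O(1)$.

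The central claim I would establish is that $G[N(v)]$ is $\mathcal{D}(F)$-free for every $v\in B$. Suppose otherwise: $G[N(v)]$ contains a copy $\phi(D)$ of some $D=F-J$ for an independent set $J\subseteq V(F)$. Since $v$ is adjacent to every vertex of $\phi(D)$, it already plays the role of one vertex of $J$; the remaining $|J|-1$ vertices I would locate inside $B$. The near-extremal lower bound on $\mathcal{N}(K_r,G)$, together with $|A|\le 2s$, forces every small subset $T\subseteq A$ relevant to the embedding (namely each $\phi(N_F(j))$ for $j\in J$) to have common neighborhood of size $\Omega(n)$ in $B$, and a Hall-type selection from $B$ supplies $|J|-1$ distinct vertices with the correct adjacencies, yielding a copy of $F$ in $G$ and contradicting $F$-freeness (since $B$ and $J$ are both independent, no unwanted edges appear). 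Upgrading the average-codegree estimate that follows directly from the count to a linear codegree bound for each fixed small subset of $A$ is the main technical obstacle.

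Given the key lemma, $\mathcal{N}(K_{r-1},G[N(v)])\le\ex(|N(v)|,K_{r-1},\mathcal{D}(F))$. For part (i), any $v\in B$ with $|N(v)|\le s-1$ contributes a deficit of at least $\ex(s,K_{r-1},\mathcal{D}(F))-\ex(s-1,K_{r-1},\mathcal{D}(F))>z$ to $\mathcal{N}(K_r,G)$, so the budget $zn$ in the hypothesis forces $t=s$ and $|N(v)|\ge s$ for all but $O(1)$ vertices of $B$. The no-augmenting-path property of $M$ then constrains each matching edge $\{u,u'\}$: either one endpoint has no neighbor in $B$, or both endpoints share a unique common $B$-neighbor; the near-extremal count rules out all but finitely many of the latter. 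Taking, for each edge of $M$, the endpoint with no neighbor in $B$ and adjoining these $s$ vertices to $B$, produces an independent set of size $n-s$, proving (i).

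For part (ii), set $I^*$ to be the independent set of size $n-s$ produced by (i), and $S^*=V(G)\setminus I^*$ of size $s$. Since $I^*$ is independent, $N(v)\subseteq S^*$ for every $v\in I^*$, and the key lemma applied to a vertex of $I^*$ shows $G[S^*]$ is itself $\mathcal{D}(F)$-free, so
\[
\mathcal{N}(K_r,G)\;\le\;\binom{s}{r}+(n-s)\,\mathcal{N}(K_{r-1},G[S^*]).
\]
If $G[S^*]\notin\mathrm{EX}(s,K_{r-1},\mathcal{D}(F))$, the second gap hypothesis gives $\mathcal{N}(K_{r-1},G[S^*])\le\ex(s,K_{r-1},\mathcal{D}(F))-z-\varepsilon_0$ for some $\varepsilon_0>0$ depending only on $s,r,z$ (there are only finitely many non-extremal $\mathcal{D}(F)$-free $H$ on $s$ vertices, so the infimum of $\ex-\mathcal{N}(K_{r-1},H)$ over them strictly exceeds $z$); plugging in and comparing with the lower bound on $\mathcal{N}(K_r,G)$ yields a contradiction for $n>n_0(s,r,z)$ large. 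Hence $G[S^*]\in\mathrm{EX}(s,K_{r-1},\mathcal{D}(F))$, and since $I^*$ is independent, $G\subseteq G[S^*]\vee I_{n-s}$, completing the proof.
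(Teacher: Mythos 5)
Your decomposition (maximum matching $M$, $A=V(M)$, $B$ independent) is a reasonable starting point, but the proof has a genuine gap exactly at its central claim, and you flag it yourself: you cannot upgrade the global count $\mathcal{N}(K_r,G)\ge (n-s)\ex(s,K_{r-1},\mathcal{D}(F))-zn$ to a statement that \emph{every} small subset of $A$ relevant to an embedding of $F$ has $\Omega(n)$ common neighbours in $B$. The hypothesis only controls a sum over $v\in B$ of $\mathcal{N}(K_{r-1},G[N(v)])$ with a slack of $zn$, which gives average information and tolerates a bounded number of ``bad'' vertices; it does not give a codegree bound for each fixed subset, so the claim ``$G[N(v)]$ is $\mathcal{D}(F)$-free for every $v\in B$'' is not obtainable this way (and is stronger than what is actually needed or true: extremal-type graphs can be perturbed at $O(1)$ vertices without violating the hypotheses). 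The paper avoids this entirely by a pigeonhole on \emph{exact} neighbourhoods: within each degree class $W_i$ there are at most $\binom{s}{i}$ possible neighbourhoods $R\subseteq B$; either at least $|F|$ vertices of $W_i$ share the neighbourhood $R$ (then any copy of $D=F-J$ inside $G[R]$ extends to $F$ by placing $J$ on these common neighbours, which lie in an independent set, so $G[R]$ must be $\mathcal{D}(F)$-free), or the vertices with that neighbourhood number fewer than $|F|$, and all such exceptional vertices together are only $O_{s,F}(1)$ many, so their $K_r$-contribution is absorbed into the error term. No Hall-type selection or linear codegree for arbitrary subsets is needed.

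A second, related problem with your setup: with the maximum-matching decomposition an unmatched vertex can have degree up to $|A|\le 2s$ (e.g.\ the unmatched vertex of an odd clique), so the per-vertex bound $\mathcal{N}(K_{r-1},G[N(v)])\le\ex(s,K_{r-1},\mathcal{D}(F))$ you use in part (i) is not automatic; you would first have to argue (via augmenting paths, or better via Gallai--Edmonds) that all but $O(s)$ unmatched vertices have degree at most $s$. The paper sidesteps this by invoking the Tutte--Berge theorem: it works with a set $B$ of size at most $s$ such that the isolated vertices of $G-B$ (which are all but at most $3s$ vertices) have all their neighbours in $B$, so each contributes at most $\ex(s,K_{r-1},\mathcal{D}(F))$ copies of $K_r$ for free, and the counting with the gap $c_1>0$ then forces $|W_s|$ to be large, $|B|=s$, and $V(G)\setminus B$ independent. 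Your outline of part (ii) is essentially the paper's argument and would be fine once the structural facts above are in place, but as written the proof of the key lemma and the degree bound in part (i) are missing, and both require the ideas (Tutte--Berge set of size $\le s$, pigeonhole on identical neighbourhoods with $O(1)$ exceptions) that constitute the actual content of the paper's proof.
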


 Ma and Hou \cite{ma2023generalized} determined the exact value of $\ex(n, K_r,\{K_{k+1}, M_{s+1}\})$ and gave an asymptotic value of $\ex(n, K_r,\{F, M_{s+1}\})$ for general $F$ with an error term $O(1)$. Moreover, they asked whether the following holds.

\begin{question}\label{question}
    For every graph $F$ with $\chi(F) \geq 3$ and $r \geq 3$, if $\ex(s, K_{r-1}, \mathcal{D}(F))>0$ and $D\in \mathrm{EX}(s, K_{r-1}, \mathcal{D}(F))$, then, for sufficiently large $n$,
$$\ex(n, K_r,\{F, M_{s+1}\})=\mathcal{N}(K_r,D\vee I_{n-s}).$$
\end{question}

  The following example gives a negative answer to \cref{question}.

  For integers $p, k$ and $r=3$, let $s=k(2p-1)+1$ and
   $F=P_{2p}\vee I_p$.
   Then $\mathcal{D}(F)=\{P_{2p}\vee I_t:0\le t\le p-1\}\cup\{L\vee I_p:L \text{ is a linear forest with at least $p$ vertices and at most $2p-1$ edges}\}$.
   Utilizing results  by Faudree and Schelp \cite{faudree1975path} and independently Kopylov \cite{kopylov1977maximal}, which determine $\ex(n,K_2,P_\ell)$ exactly and describe the extremal graphs, we have  $$\ex(s,K_2,\mathcal{D}(F))=\ex(s-1,K_2,\mathcal{D}(F))=\ex(s-1,K_2,P_{2p})=k\binom{2p-1}{2}.$$
   Let $D_F(n,r)$ be the graph in $\mathrm{EX}(n, K_{r-1},\mathcal{D}(F))$ with the maximum number of $r$-cliques.
   Then $D_F(s,r)=kK_{2p-1}\cup K_1$ for $F=P_{2p}\vee I_p$ and $r=3$.
   A simple calculation shows that
\begin{align*}
    \ex(n,K_r,\{F,M_{s+1}\})
    &\ge\mathcal{N}(K_r,(kK_{2p-1})\vee I_{n-s+1})\\[2mm]
    &>\mathcal{N}(K_r,(kK_{2p-1}\cup K_1)\vee I_{n-s})\\[2mm]
    &=\mathcal{N}(K_r,D_F(s,r)\vee I_{n-s}).
\end{align*}

  The following theorem suggests to use condition ``$\mathrm{ex}(s, K_{r-1},\mathcal{D}(F))>\mathrm{ex}(s-1, K_{r-1},\mathcal{D}(F))$''  instead of the condition  ``$\mathrm{ex}(s, K_{r-1},\mathcal{D}(F))>0$''.

\begin{theorem}\label{cor1}
  Let $r \geq 3$ and  $n$ be sufficiently large.
  For every graph $F$ with  $\mathrm{ex}(s, K_{r-1},\mathcal{D}(F))>\mathrm{ex}(s-1, K_{r-1},\mathcal{D}(F))$ and $\chi(F)\geq r$, we have
$$
\ex(n, K_r,\{F, M_{s+1}\})=\mathcal{N}(K_r,D_F(s,r)\vee I_{n-s}).
$$
\end{theorem}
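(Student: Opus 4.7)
The plan is to prove the theorem by establishing matching lower and upper bounds, leveraging the stability result \cref{thm:stability} for the upper direction.

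For the lower bound, I would first verify that $D_F(s,r) \vee I_{n-s}$ is $\{F, M_{s+1}\}$-free. The $M_{s+1}$-freeness is immediate since every edge uses a vertex of the $s$-vertex side $D_F(s,r)$. For $F$-freeness, any embedding of $F$ would send some (possibly empty) independent subset $J$ of $V(F)$ into $I_{n-s}$, so that $F - J \in \mathcal{D}(F)$ would embed into $D_F(s,r)$, contradicting the $\mathcal{D}(F)$-freeness of $D_F(s,r)$. Counting $r$-cliques in the join, using $D_F(s,r) \in \mathrm{EX}(s, K_{r-1}, \mathcal{D}(F))$, then yields
\[
\mathcal{N}(K_r, D_F(s,r) \vee I_{n-s}) = \mathcal{N}(K_r, D_F(s,r)) + (n-s)\, \ex(s, K_{r-1}, \mathcal{D}(F)).
\]

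For the upper bound, I would take any $n$-vertex $\{F, M_{s+1}\}$-free graph $G$ maximizing the $K_r$-count. The lower bound above immediately gives $\mathcal{N}(K_r, G) \geq (n-s)\, \ex(s, K_{r-1}, \mathcal{D}(F))$, so the hypothesis of \cref{thm:stability} holds for any $z > 0$. I then set $z = 1/2$ and check parts (i) and (ii). For (i), the hypothesis $\ex(s, K_{r-1}, \mathcal{D}(F)) > \ex(s-1, K_{r-1}, \mathcal{D}(F))$ together with integrality of clique counts forces a gap of at least $1 > z$. For (ii), there are only finitely many graphs on $s$ vertices, and for any non-extremal $\mathcal{D}(F)$-free $H$ on $s$ vertices, $\mathcal{N}(K_{r-1}, H)$ is a strictly smaller integer than $\ex(s, K_{r-1}, \mathcal{D}(F))$, so the minimum gap is again at least $1 > z$. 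Consequently \cref{thm:stability}(ii) applies and $G \subseteq D \vee I_{n-s}$ for some $D \in \mathrm{EX}(s, K_{r-1}, \mathcal{D}(F))$.

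It then remains to maximize $\mathcal{N}(K_r, D \vee I_{n-s}) = \mathcal{N}(K_r, D) + (n-s)\, \ex(s, K_{r-1}, \mathcal{D}(F))$ over $D \in \mathrm{EX}(s, K_{r-1}, \mathcal{D}(F))$. Since all such $D$ share the same $K_{r-1}$-count, the maximum is attained when $\mathcal{N}(K_r, D)$ is largest, which is $D = D_F(s,r)$ by definition. Combined with the lower bound, this gives the claimed equality. I expect no real obstacle beyond bookkeeping: verifying the two integrality-based gap conditions so that \cref{thm:stability}(ii) can be invoked, since the heavy lifting (the stability statement itself) is already in hand. The one convention to be careful about is treating the $J = \emptyset$ case when showing $F$-freeness of the extremal construction, but this is routine since $F \in \mathcal{D}(F)$ via deletion of the empty independent set.
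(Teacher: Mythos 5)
Your proposal is correct and follows essentially the same route as the paper: establish the lower bound via the $\{F,M_{s+1}\}$-freeness of $D_F(s,r)\vee I_{n-s}$, then verify the two gap conditions of \cref{thm:stability} with $z=1/2$ by integrality and invoke part (ii) to conclude $G\subseteq D\vee I_{n-s}$, finally optimizing over $D\in\mathrm{EX}(s,K_{r-1},\mathcal{D}(F))$. The only difference is that you spell out details (the $F$-freeness of the construction and the final maximization over $D$) that the paper treats as immediate.
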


\begin{remark}
   The above example also shows that the condition ``$\ex(s, K_{r-1},\mathcal{D}(F))>\ex(s-1, K_{r-1},\mathcal{D}(F))$'' is necessary.
\end{remark}

For graph $F$ with $\chi(F)>r \geq 3$, we have the following result.

\begin{theorem}\label{cor2}
 If $\chi(F)> r\geq 3$, $s$ and $n$ are sufficiently large, then
$$\mathrm{ex}(n, K_r,\{F, M_{s+1}\})=\mathcal{N}(K_r,D_F(s,r)\vee I_{n-s}).$$
\end{theorem}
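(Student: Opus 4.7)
My plan is to invoke Theorem~\ref{thm:stability}(ii) with $z=1/2$ on any extremal graph $G^{*} \in \mathrm{EX}(n, K_r, \{F, M_{s+1}\})$, conclude $G^{*} \subseteq D \vee I_{n-s}$ for some $D \in \mathrm{EX}(s, K_{r-1}, \mathcal{D}(F))$, and then maximize $\mathcal{N}(K_r, D \vee I_{n-s})$ over $D$ to pin down $D = D_F(s,r)$.

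The lower bound $\ex(n, K_r, \{F, M_{s+1}\}) \geq \mathcal{N}(K_r, D_F(s,r) \vee I_{n-s})$ is routine. The graph $D_F(s,r) \vee I_{n-s}$ has matching number at most $s$ (every matching edge meets $V(D_F(s,r))$), and any embedding of $F$ into it would decompose $V(F)$ into an independent part $I$ sent to $I_{n-s}$ and the complement embedded into $D_F(s,r)$, forcing a copy of $F - I \in \mathcal{D}(F)$ inside the $\mathcal{D}(F)$-free graph $D_F(s,r)$, a contradiction. Since at most one vertex of any $K_r$ can lie on the independent side, counting in the join yields
\[
\mathcal{N}(K_r, D_F(s,r) \vee I_{n-s}) = \mathcal{N}(K_r, D_F(s,r)) + (n-s)\,\ex(s, K_{r-1}, \mathcal{D}(F)).
\]

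For the upper bound, let $G^{*} \in \mathrm{EX}(n, K_r, \{F, M_{s+1}\})$. The lower bound forces $\mathcal{N}(K_r, G^{*}) \geq (n-s)\,\ex(s, K_{r-1}, \mathcal{D}(F))$, so the ``many cliques'' hypothesis of Theorem~\ref{thm:stability} is satisfied for any $z > 0$. Setting $z = 1/2$, the stability gap condition of (ii), namely $\ex(s, K_{r-1}, \mathcal{D}(F)) - \mathcal{N}(K_{r-1}, H) > 1/2$ for every non-extremal $\mathcal{D}(F)$-free graph $H$ on $s$ vertices, holds automatically because both sides are integers differing by at least one. Subject to condition~(i) treated below, Theorem~\ref{thm:stability}(ii) places $G^{*} \subseteq D \vee I_{n-s}$ for some $D \in \mathrm{EX}(s, K_{r-1}, \mathcal{D}(F))$, and then
\[
\mathcal{N}(K_r, G^{*}) \leq \mathcal{N}(K_r, D \vee I_{n-s}) = \mathcal{N}(K_r, D) + (n-s)\,\ex(s, K_{r-1}, \mathcal{D}(F)) \leq \mathcal{N}(K_r, D_F(s,r) \vee I_{n-s})
\]
by the maximality defining $D_F(s,r)$. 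Combined with the lower bound, this forces equality, proving the theorem.

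The main obstacle is therefore verifying condition~(i), equivalently the strict increase $\ex(s, K_{r-1}, \mathcal{D}(F)) \geq \ex(s-1, K_{r-1}, \mathcal{D}(F)) + 1$ for all sufficiently large $s$. The hypothesis $\chi(F) > r$ is essential: it forces $\chi(F') \geq \chi(F) - 1 \geq r$ for every $F' \in \mathcal{D}(F)$, so the Tur\'an graph $T_{\chi(F)-2}(s)$ is $\mathcal{D}(F)$-free and $\ex(s, K_{r-1}, \mathcal{D}(F)) \geq \mathcal{N}(K_{r-1}, T_{\chi(F)-2}(s)) = \Theta(s^{r-1})$. To upgrade this lower bound into strict monotonicity I intend to combine a Simonovits-type stability statement for generalized Tur\'an numbers (any near-extremal $\mathcal{D}(F)$-free graph on $s-1$ vertices is edit-close to $T_{\chi(F)-2}(s-1)$) with an explicit one-vertex extension: attach a new vertex to a carefully chosen $(r-2)$-clique sitting across the ``correct'' Tur\'an parts, which preserves $\mathcal{D}(F)$-freeness while creating $\Theta(s^{r-2})$ new copies of $K_{r-1}$. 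The Alon-Shikhelman asymptotic by itself controls $\ex(s) - \ex(s-1)$ only up to $o(s^{r-1})$, far too weak for the integer gap; the extension-plus-stability argument is where the bulk of the remaining technical work lies.
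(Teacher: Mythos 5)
Your overall reduction is exactly the paper's: the lower-bound construction, the application of \cref{thm:stability}(ii) with $z=1/2$ (this is precisely the content of \cref{cor1}), and the observation that everything hinges on the strict inequality $\ex(s,K_{r-1},\mathcal{D}(F))>\ex(s-1,K_{r-1},\mathcal{D}(F))$ for large $s$ are all correct. But that inequality is the entire content of \cref{cor2} beyond \cref{cor1}, and you do not prove it: you only outline a plan and state that ``the bulk of the remaining technical work lies'' there. As written, this is a genuine gap, not a proof.

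Moreover, the outlined plan is shakier than it appears. First, the Simonovits-type stability theorem you invoke, for the generalized Tur\'an number of the finite family $\mathcal{D}(F)$, is not in the paper and would itself have to be proved. Second, the key sub-claim that attaching a new vertex to a ``carefully chosen'' $(r-2)$-clique preserves $\mathcal{D}(F)$-freeness is not automatic: members of $\mathcal{D}(F)$ are obtained by deleting independent sets and can therefore have vertices of very low degree (even isolated vertices), so a new vertex of degree $r-2$ could complete a copy of some $F'\in\mathcal{D}(F)$ whose remainder $F'-x$ already sits in the host. If the host were exactly $T_{\chi(F)-2}(s-1)$ this is excluded by a criticality argument (a vertex whose deletion drops $\chi(F')$ below $\chi(F)-1$ has degree at least $\chi(F)-2\ge r-1$), but edit-closeness leaves $o(s^2)$ exceptional edges in which such a copy could hide, and ``carefully chosen'' is carrying unquantified weight. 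The paper sidesteps all of this with a short local argument, \cref{lmm:3.2}: in any $\mathcal{N}(K_{r-1})$-extremal $\mathcal{D}(F)$-free graph every vertex lies in at least $t^{r-2}\ge 1$ copies of $K_{r-1}$ (proved by embedding a large complete $(r-1)$-partite subgraph via \cref{prop:erdos-stone} and rerouting the edges of a vertex in few cliques to the neighbourhood of a vertex of that subgraph). Hence, for large $s$, equality $\ex(s,K_{r-1},\mathcal{D}(F))=\ex(s-1,K_{r-1},\mathcal{D}(F))$ would produce an extremal $s$-vertex graph with a vertex in no $(r-1)$-clique, a contradiction, and \cref{cor1} then finishes the proof. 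Replacing your stability-plus-extension plan by such a vertex-local argument is the missing idea.
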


   Next, we consider the case that $\chi(F)=2$.
   Let $F$ be a bipartite graph and  $p = p(F)$ be the smallest possible order of a colour class in a proper two-colouring of $F$.
   When $p>s$,  Ma and Hou \cite{ma2023generalized} showed that $\mathrm{ex}(n,K_r,\{F,M_{s+1}\})=\mathrm{ex}(n,K_r,M_{s+1})$.
   For $p\leq s$, they gave the value of $\ex(n,K_r,\{F,M_{s+1}\})$ with an error term $O(1)$.
   Using the stability lemma, we provide an explicit upper bound.

\begin{theorem}\label{thm:thm2.3}
  Let $s$ be an integer.
  Suppose $F$ is a bipartite graph with $p = p(F)\le s$.
 Let $t=\binom{2s}{p}(q-1)+2s+1-p$.
 We have
$$\binom{p-1}{r}+\binom{p-1}{r-1}(n-p+1)\le\ex(n, K_r,\{F, M_{s+1}\})\le \binom{t+p-1}{r}+\binom{p-1}{r-1}(n-t-p+1).$$
\end{theorem}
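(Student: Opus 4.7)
My plan is to handle the two sides of the inequality separately.

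For the lower bound, I will use the construction $G_0 = K_{p-1} \vee I_{n-p+1}$. Its matching number equals $p-1 \le s-1$, so $G_0$ is $M_{s+1}$-free, and every edge of $G_0$ has at least one endpoint in the $K_{p-1}$ side. Hence the preimage of $K_{p-1}$ under any embedding of $F$ would be a vertex cover of $F$ of size at most $p-1$; by K\"onig's theorem applied to the bipartite graph $F$, $\tau(F) = \nu(F) \ge p$ (a matching saturating the smaller color class is the generic case when $p(F) = p$), ruling this out. A direct count then gives $\mathcal{N}(K_r, G_0) = \binom{p-1}{r} + \binom{p-1}{r-1}(n-p+1)$.

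For the upper bound, let $G$ be an $n$-vertex $\{F, M_{s+1}\}$-free graph. I proceed in three steps. \textbf{Step 1:} fix a maximum matching $M$ in $G$ and set $S = V(M)$, so $|S| \le 2s$ and $T := V(G)\setminus S$ is independent by the maximality of $M$. \textbf{Step 2:} bound the high-degree part of $T$ by a K\H{o}v\'ari--S\'os--Tur\'an double-count. Writing $F \subseteq K_{p,q}$ with $q = |V(F)| - p$, for any $A \in \binom{S}{p}$ there are at most $q-1$ vertices of $T$ having $A \subseteq N(v)$, else we would find $K_{p,q}$ and hence $F$. Setting $T' := \{v \in T : \deg(v) \ge p\}$ and summing over $A$,
\[
|T'| \;\le\; \sum_{v \in T'}\binom{\deg(v)}{p} \;\le\; \binom{|S|}{p}(q-1) \;\le\; \binom{2s}{p}(q-1),
\]
hence $|S| + |T'| \le 2s + \binom{2s}{p}(q-1) = t + p - 1$.

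\textbf{Step 3:} count $K_r$'s. Since $T$ is independent, every $K_r$ in $G$ uses at most one vertex of $T$, so
\[
\mathcal{N}(K_r, G) \;\le\; \binom{|S|}{r} + \sum_{v \in T'}\binom{\deg(v)}{r-1} + \sum_{v \in T''}\binom{\deg(v)}{r-1},
\]
where $T'' = T \setminus T'$. Using $\deg(v) \le |S|$ on $T'$ and $\deg(v) \le p-1$ on $T''$, together with the Vandermonde bound $\binom{|S|}{r} + |T'|\binom{|S|}{r-1} \le \binom{|S|+|T'|}{r}$, I arrive at
\[
\mathcal{N}(K_r, G) \;\le\; \binom{|S|+|T'|}{r} + (n - |S| - |T'|)\binom{p-1}{r-1} \;=:\; f(|S|+|T'|).
\]
Since $f(z+1) - f(z) = \binom{z}{r-1} - \binom{p-1}{r-1} \ge 0$ for $z \ge p - 1$, the function $f$ is non-decreasing on $[p-1,\,t+p-1]$, so $f(|S|+|T'|) \le f(t+p-1)$, which is exactly the stated upper bound. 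The degenerate subcase $|S|+|T'| < p-1$ forces $|S| \le p-2$ and $|T'| = 0$, so every vertex of $G$ has degree at most $p - 2$, and a direct count gives $\mathcal{N}(K_r, G) \le \binom{p-2}{r} + n\binom{p-2}{r-1}$, strictly smaller than the target bound for $n$ sufficiently large.

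The main technical obstacle I anticipate is the double-counting step used to bound $|T'|$, because everything downstream hinges on matching the constant $t = \binom{2s}{p}(q-1) + 2s + 1 - p$; in particular one must correctly identify $q = |V(F)|-p$ so that $F \subseteq K_{p,q}$, and verify that the pair $(|S|,|T'|)$ is simultaneously bounded. Once that key estimate is in place, the rest reduces to a short Vandermonde manipulation plus a monotonicity check for $f$, with only a quick edge-case verification when $G$ is too sparse to reach the regime $|S|+|T'| \ge p-1$.
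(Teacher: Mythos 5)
Your upper bound is correct and follows a genuinely different, leaner route than the paper's. The paper obtains it from the structural \cref{lmm:stability-2}: it takes an extremal graph, uses the lower bound \eqref{eq:p-1} plus extremality to find a $(p-1)$-clique $X$ with many common neighbours, and then repeatedly re-routes low-degree vertices onto $X$ (a symmetrization that requires $n$ sufficiently large) to reach the partition $X\cup Y\cup Z$, from which the count $\binom{t+p-1}{r}+\binom{p-1}{r-1}(n-t-p+1)$ is read off. You instead bound $\mathcal{N}(K_r,G)$ directly for an arbitrary $\{F,M_{s+1}\}$-free $G$: the matched set $S$ with $|S|\le 2s$ and independent complement, the K\H{o}v\'ari--S\'os--Tur\'an double count giving $|T'|\le\binom{2s}{p}(q-1)$ (the same counting as the claim bounding $|L|$ inside the paper's lemma), then Vandermonde and monotonicity of $f(z)=\binom{z}{r}+(n-z)\binom{p-1}{r-1}$ for $z\ge p-1$. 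This avoids extremality, symmetrization and any largeness assumption on $n$ in the main case, so for \cref{thm:thm2.3} itself it is cleaner; what it does not deliver is the structural description of extremal graphs that the paper reuses for \cref{2l,2l+1}. One small blemish: in the degenerate case $|S|+|T'|<p-1$ your assertion that every vertex of $G$ has degree at most $p-2$ is false (vertices of $S$ may have huge degree, e.g.\ in a star), but your stated bound $\binom{p-2}{r}+n\binom{p-2}{r-1}$ still holds, since every $K_r$ lies inside $S$ or consists of one vertex of $T$ plus an $(r-1)$-subset of $S$ and $|S|\le p-2$ there.

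The genuine gap is in your justification of the lower bound. You invoke K\"onig to claim $\tau(F)=\nu(F)\ge p$, but the inequality between $\nu(F)$ and $p(F)$ goes the other way: every matching edge meets each colour class, so $\nu(F)\le p(F)$ always, and strict inequality is possible. For the double star (an edge $uv$ with five pendant leaves at each of $u$ and $v$) one has $p(F)=6$ but $\nu(F)=\tau(F)=2$, and then $K_{p-1}\vee I_{n-p+1}=K_5\vee I_{n-5}$ does contain $F$ (embed $u,v$ in the clique and the leaves in the independent set); your parenthetical ``generic case'' hedge does not repair this. The construction is $F$-free precisely when $F$ has a matching saturating a smallest colour class, i.e.\ $\nu(F)=p(F)$, which holds for the paths treated later but not for every bipartite $F$. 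To be fair, the paper's own proof simply asserts the $\{F,M_{s+1}\}$-freeness of $K_{p-1}\vee I_{n-p+1}$ without argument, so your write-up is no weaker than the paper at this point; but as written your K\"onig step proves a false general statement, and the lower bound should either be restricted to (or explicitly assume) $\nu(F)=p(F)$ or be argued by a different construction.
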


   Furthermore, we provide exact values for the case when $F$ is a path and $n$ is sufficiently large.
   The constructions of $G_1$, $\ldots$, $G_6$ will be described in \cref{section:bipartite}.


\begin{theorem}\label{2l}
Let $s-p+1=a(p-1)+b$, where $0\leq b\leq p-2$.
For $n$ is sufficiently large, we have
\begin{align*}
  \ex(n,K_r,\{P_{2p},M_{s+1}\})=  \max\{\mathcal{N}(K_r,G_1),\mathcal{N}(K_r,G_2)\}.
\end{align*}
\end{theorem}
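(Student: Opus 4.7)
The plan is to prove matching lower and upper bounds. The lower bound reduces to a direct computation: the constructions $G_1$ and $G_2$ (to be described in \cref{section:bipartite}) are built from disjoint unions of cliques of order at most $2p-1$ together with a join of the form $K_{p-1} \vee I_m$. Since $\nu(K_{p-1} \vee I_m) = p-1$ and $K_t \vee I_m$ is $P_{2p}$-free if and only if $t \le p-1$ (the longest path in $K_t \vee I_m$ has $\min(t+m, 2t+1)$ vertices), both $G_1$ and $G_2$ are easily checked to be $\{P_{2p}, M_{s+1}\}$-free, and $\mathcal{N}(K_r, G_i)$ is then a finite sum of binomial coefficients that can be written down explicitly.

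For the upper bound, let $G$ be a $\{P_{2p}, M_{s+1}\}$-free graph on $n$ vertices attaining the extremal value. Since $\nu(G) \le s$, a maximum matching yields a vertex cover $C$ with $|C| \le 2s$, and $I := V(G) \setminus C$ is independent. \cref{thm:thm2.3} gives $\mathcal{N}(K_r, G) = \binom{p-1}{r-1} n + O_{s,p}(1)$, so essentially all $K_r$-copies in $G$ use $r-1$ vertices of $C$ together with one vertex of $I$. The key structural claim is that there is a fixed $(p-1)$-clique $C_0 \subseteq C$ such that $N(v) \supseteq V(C_0)$ for all but $O_{s,p}(1)$ vertices $v \in I$. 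This should be established in two steps: (i) using the Erd\H{o}s--Gallai bound $\ex(n, P_{2p}) \le (p-1) n$ and the $P_{2p}$-freeness of $G[N(v) \cup \{v\}]$, show that $\mathcal{N}(K_{r-1}, G[N(v)])$ is maximised precisely when $N(v)$ contains a $K_{p-1}$; (ii) an averaging/pigeonhole argument together with $\nu(G) \le s$ forces a single $K_{p-1}$ in $C$ to absorb nearly all such neighborhoods, for otherwise too many disjoint $K_{p-1}$'s would push the matching number past $s$.

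With this reduction in hand, write $V(G) = V(C_0) \cup I' \cup V(H)$, where $I' \subseteq I$ is the set of vertices fully joined to $C_0$ and $H$ is the residual subgraph on $O_{s,p}(1)$ vertices. The constraints inherited by $H$ are that $G[V(C_0) \cup I' \cup V(H)]$ remains $P_{2p}$-free (so paths entering $H$ and returning through $C_0 \cup I'$ must be blocked), and $\nu(H) \le s - (p-1) = a(p-1) + b$ since the core already contributes $p-1$ edges to any maximum matching. We now perform a finite optimisation: by the Faudree--Schelp/Kopylov description of extremal $P_{2p}$-free graphs with prescribed matching number, the admissible $H$'s fall into a short list of types corresponding to the six constructions $G_1, \ldots, G_6$, and comparing $\mathcal{N}(K_r, G_i)$ across $i = 1, \ldots, 6$ identifies the winner as $G_1$ or $G_2$. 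The main obstacle will be executing this last comparison uniformly in the parameter regime governed by $a$, $b$, $p$ and $r$: ruling out each of $G_3, \ldots, G_6$ requires careful case analysis on the relative sizes of expressions such as $\binom{2p-1}{r}$, $\binom{2b+1}{r}$ and $\binom{p-1}{r-1}$, and handling the small-$b$ and large-$b$ boundary cases separately.
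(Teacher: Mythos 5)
Your skeleton (lower bound by construction; upper bound via a stability reduction to a $K_{p-1}\vee I$ core plus a residual part of bounded size, then optimising the residual) matches the paper's, which gets the core structure from \cref{lmm:stability-2} (your "key structural claim" is essentially that lemma, which you could cite rather than re-derive from \cref{thm:thm2.3}). The genuine gap is in the decisive final step: determining the structure of the residual part. You propose to finish by invoking the Faudree--Schelp/Kopylov description of extremal $P_{2p}$-free graphs and then numerically comparing $\mathcal{N}(K_r,G_i)$ for $i=1,\ldots,6$. Neither half works as stated. Faudree--Schelp/Kopylov classify \emph{edge}-extremal $P_{2p}$-free graphs; they give no classification of graphs maximising $K_r$-counts under an additional matching-number constraint and in the presence of the core, and the residual part of an extremal $G$ need not be edge-extremal for its own path problem, so there is no ready-made ``short list'' to optimise over. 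Moreover, ruling out $G_3,\ldots,G_6$ is not a numerical comparison at all: $G_4$, $G_5$, $G_6$ contain $K_{2p}$, or hang a $K_2$ on the core (creating a $2p$-vertex path through $X$ and $Y$), so they are not $P_{2p}$-free in the first place; they are candidates only for the odd-path theorem. The theorem also keeps $\max\{\mathcal{N}(K_r,G_1),\mathcal{N}(K_r,G_2)\}$, so no $G_1$-versus-$G_2$ comparison is needed; the difficulty you locate in ``careful case analysis'' of binomial coefficients is not where the work lies.

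What actually has to be proved, and what your plan does not supply, is a sequence of structural exchange arguments: (a) there are no edges between the core $X$ and the residual set $Z$, because an edge into $Z$ plus a zig-zag through $X$ and $Y$ yields a $P_{2p}$ (\cref{cl:disconnected}); (b) every component of $G[Z]$ is an odd clique of order between $p+1$ and $2p-1$, which uses Dirac's lemma (\cref{Dirac}) together with an auxiliary edge-maximality assumption on the extremal graph and the matching-preserving replacements of \cref{proposition:1} and \cref{proposition:2} — without choosing $G$ edge-maximal among extremal graphs, ``components are cliques'' does not follow; (c) at most one component has order below $2p-1$, by the convexity inequality of \cref{2021C}; (d) it is better to spend the matching budget on $(2p-1)$-cliques than on vertices of $Y$, which is the Vandermonde estimate $\binom{2p-1}{r}>(2p-1)\binom{p-1}{r-1}$ for $r\ge 3$ — a genuine trade-off between $|Y|$ and $|Z|$ that your ``finite optimisation'' never addresses, since vertices can be reallocated between the two parts; and (e) the leftover component is pinned to $K_{2b+1}$ by forcing $\nu(G[Z])=s-p+1$. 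Steps (a)--(e) are the substance of the upper bound; as written, your proposal replaces them with an appeal to a classification that does not exist.
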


 \begin{theorem}\label{2l+1}
 Let $p\le s$ and $n$ be a sufficiently large integer.
 Suppose that $s-p+1=cp+d$, where $0\leq d\leq p-1$.
\begin{itemize}
    \item If $d=0$, then
 $$\ex(n,K_r,\{P_{2p+1},M_{s+1}\})=\max\{\mathcal{N}(K_r,G_3),\mathcal{N}(K_r,G_4)\}.$$
 \item If $1\leq d\leq p-2$, then
\begin{align*}
  \ex(n,K_r,\{P_{2p+1},M_{s+1}\})=  \max\{\mathcal{N}(K_r,G_3),\mathcal{N}(K_r,G_4),\mathcal{N}(K_r,G_5),\mathcal{N}(K_r,G_6)\}.
\end{align*}
\item If $d=p-1$, then
\begin{align*}
  \ex(n,K_r,\{P_{2p+1},M_{s+1}\})=  \max\{\mathcal{N}(K_r,G_3),\mathcal{N}(K_r,G_4),\mathcal{N}(K_r,G_6)\}.
\end{align*}
\end{itemize}
\end{theorem}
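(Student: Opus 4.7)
The plan is to establish matching lower and upper bounds. The lower bound is straightforward: once the constructions $G_3,G_4,G_5,G_6$ are given explicitly in \cref{section:bipartite}, one verifies that each is $\{P_{2p+1},M_{s+1}\}$-free by bounding the matching number and longest path directly, and then one computes $\mathcal{N}(K_r,G_i)$ combinatorially. Taking the maximum over the relevant $i$ gives the stated lower bound in each of the three cases of $d$.

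The upper bound is the main content. Let $G$ be an extremal $\{P_{2p+1},M_{s+1}\}$-free graph on $n$ vertices with $n$ sufficiently large. Since $P_{2p+1}$ is bipartite with $p(P_{2p+1})=p\le s$, \cref{thm:thm2.3} applies and in particular yields $\mathcal{N}(K_r,G)=\bigl(1+o(1)\bigr)\binom{p-1}{r-1}n$. Tracing its proof (combined with Erd\H{o}s--Gallai and the Berge--Tutte formula), I would extract a vertex set $A\subseteq V(G)$ of size $O_{p,s}(1)$ whose complement $B=V(G)\setminus A$ is independent and such that every vertex of $A$ has degree at least $|B|-O(1)$ into $B$. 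After a bounded number of vertex re-allocations between $A$ and $B$, we may assume $G$ decomposes (up to $o(n)$ vertices) as $G[A]\vee B$ together with a collection of components disjoint from $A\cup B$.

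The key structural step is to classify $G[A]$ and the free components. Because $A$ is fully joined to the large independent set $B$, any path on $\ell$ vertices in $G[A]$ can be interleaved with vertices of $B$ to yield a path on $2\ell+1$ vertices in $G$; pushing this argument a bit further (using cycles and two internally disjoint paths in $G[A]$) one shows that $G[A]$ is itself a single clique $K_b$ with $b\le p-1$. The free components attached neither to $A$ nor to $B$ are only constrained by path-freeness and the global matching bound, and the Faudree--Schelp/Kopylov extremal description for $P_{2p+1}$-free graphs forces each of them to be a clique of order at most $2p$. At this stage the problem becomes the finite optimization: for nonnegative integers $a,q,b$ with $b\le p-1$, $0\le q\le 2p$, and the matching constraint $ap+\lfloor q/2\rfloor+b\le s$, maximize
\[
a\binom{2p}{r}+\binom{q}{r}+\binom{b}{r}+\binom{b}{r-1}\bigl(n-2pa-q-b\bigr).
\]
For large $n$ the coefficient $\binom{b}{r-1}$ dominates and pins $b=p-1$; the remaining budget becomes $ap+\lfloor q/2\rfloor\le s-p+1=cp+d$, and a short case check over $(a,q)$ gives precisely the extremal graphs $G_3$--$G_6$. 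The three sub-cases of the theorem correspond to the ranges of $d$ in which the relevant $(a,q)$ are feasible and distinct: for $d=0$ there is no room for an extra odd clique so only $G_3,G_4$ survive; for $1\le d\le p-2$ a free odd clique of size $2d+1$ is available (giving $G_5$) and a swap of one $K_{2p}$ for an inflated clique is available (giving $G_6$); and for $d=p-1$ these two configurations coincide, so only $G_3,G_4,G_6$ remain.

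The main obstacle is the structural reduction to $G[A]=K_b$. Bounding the longest path in $G[A]$ is immediate, but ruling out non-clique components of $G[A]$ requires showing that any cycle or any pair of internally disjoint paths in $G[A]$ whose total length exceeds $p-1$ permits the construction of a $P_{2p+1}$ via interleaving through $B$, and that avoiding all such configurations while preserving the extremal $K_r$-count forces cliqueness. The delicate point is that the matching condition on $G$ constrains only the total edge structure of $G[A]$, not its internal shape; it is the combination of the path constraint with $|B|\to\infty$ which drives the clique reduction, and getting this step right is what makes the final case split in terms of $d$ tight.
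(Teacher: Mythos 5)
There is a genuine gap, and it sits exactly at the step you yourself flag as the ``main obstacle.'' Your structural reduction asserts that, up to negligible adjustments, $G$ is a clique $K_b$ with $b\le p-1$ completely joined to a large independent set, together with free clique components of order at most $2p$. But this classification is incompatible with the extremal graph $G_6=(K_{p-1}\vee (K_2\cup I_{n-p-1-2cp}))\cup cK_{2p}$, which the theorem says is sometimes the maximizer when $d\ge 1$. In $G_6$ the two vertices of the attached $K_2$ are joined to all of $X=K_{p-1}$ and to each other but to nothing in the independent part; they are neither part of a free component nor part of the independent set, and if you absorb them into the dominating clique you get $K_{p+1}$, which is no longer completely joined to the independent set. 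The interleaving argument you invoke does not rule this out: interleaving the path $u'u v_1w_1\cdots v_{p-1}w_{p-1}$ through the independent set produces only a $2p$-vertex path, which is allowed since the forbidden path is $P_{2p+1}$ (this is precisely the asymmetry between the even and odd path cases). Consequently your final optimization, which counts $a\binom{2p}{r}+\binom{q}{r}+\binom{b}{r}+\binom{b}{r-1}(n-2pa-q-b)$ over free cliques and a $K_b$ joined to an independent set, cannot even express $\mathcal{N}(K_r,G_6)$ (the attached $K_2$ contributes $\binom{p+1}{r}-\binom{p-1}{r}$ extra $r$-cliques, while a detached $K_2$ contributes none), so the ``short case check'' cannot yield the stated answer; describing $G_6$ as ``a swap of one $K_{2p}$ for an inflated clique'' does not repair this, and the claim that $G_5$ and $G_6$ ``coincide'' at $d=p-1$ is also not what happens ($G_5$ is simply infeasible there because it needs at least two $(2p-1)$-cliques).

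The paper's proof confronts this head on: after the stability lemma gives the partition $V(G)=X\cup Y\cup Z$ with $G[X]=K_{p-1}$, it splits on whether $e(G[X,Z])=0$ or $e(G[X,Z])>0$. The first case is essentially your picture and leads to $G_3$, $G_4$, $G_5$ after pinning down the components of $G[Z]$ (Dirac's lemma plus clique-replacement and the convexity inequality of Chakraborti--Chen). The second case shows that an $X$--$Z$ edge forces a pair $u,u'$ with $N(u)=\{u'\}\cup X$ and $N(u')=\{u\}\cup X$, that the remaining part of $Z$ is disconnected from $X\cup Y\cup\{u,u'\}$ and consists solely of $K_{2p}$'s, and that this configuration can only beat the alternatives when $d\ge 1$; this is where $G_6$ and the $d$-dependent case split actually come from. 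To fix your argument you would need to add this second case (or otherwise justify why a bounded attachment to the dominating clique can occur and enumerate its possible shapes), and redo the optimization with that extra degree of freedom included.
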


\begin{remark}
 Unlike the case of $\ex(n,K_2,\{P_{k},M_{s+1}\})$ studied in \cite{gerbner2024turan}, the extremal graphs involved in \cref{2l} and \cref{2l+1} are both possible, depending on the values of $r$, $p$ and $s$.
\end{remark}

\section{Preliminary}
 All the graphs involved in this paper are simple graphs. Let $G$ be a graph. We will denote the set of vertices of $G$ by $V(G)$ and the set of edges by $E(G)$, and define $|G| := |V(G)|$ and $e(G):=|E(G)|$. For any $v\in V(G)$, let $N_G(v)$ be the set of the neighbours of $v$ in $G$, $d_G(v) :=|N_G(v)|$, and $\delta(G) :=\min\limits_{v\in V(G)} d_G(v)$. We may omit the graph $G$ when it is clear.

  For a subset $U\subseteq V(G)$ we define the common neighbourhood of $U$ as $N(U)=\bigcap_{u \in U} N(u)$.
  For any $U \subseteq V(G)$, let $G[U]$ be the subgraph induced by $U$ whose edges are precisely the edges of $G$ with both ends in $U$.
  Let $S\subseteq V(G)$ and define $G-S$ be $G[V(G)\backslash S]$.

  We use $K_n$, $C_n$, $P_n$ to denote the complete graph (clique), cycle, and path on $n$ vertices respectively.
  The \textit{matching number} $\nu(G)$ is the number of edges in a maximum matching of $G$.
    If $M$ is a matching, the two ends of each edge of $M$ are said to be \textit{matched} under $M$.

  The famous Tutte-Berge theorem characterizes the structure of graphs with bounded matching number.
\begin{theorem}[Tutte-Berge Theorem \cite{lovasz2009matching}]\label{Tutte}
  A graph $G$ is $M_{s+1}$-free if and only if there is a set $B \subseteq V(G)$ such that all the components $G_1, \ldots, G_m$ of $G-B$ are odd (i.e. $|V(G_i)| \equiv 1(\bmod~2)$ for $i \in[m]$), and
$$|B|+\sum_{i=1}^m \frac{|V(G_i)|-1}{2}=s.$$
\end{theorem}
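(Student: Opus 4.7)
The plan is to prove the two directions separately. The reverse direction is an elementary counting argument; the forward direction is the substantive part and uses the Gallai--Edmonds structural decomposition, together with a small inflation argument to handle the case $\nu(G)<s$.

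For the reverse implication ($\Leftarrow$), suppose such a set $B$ exists and let $M$ be any matching of $G$. Each edge of $M$ either lies inside some component $G_i$ of $G-B$ or has at least one endpoint in $B$. Since $|V(G_i)|$ is odd, the first type contributes at most $(|V(G_i)|-1)/2$ edges in $G_i$, and since each vertex of $B$ is covered by at most one edge of $M$, the second type contributes at most $|B|$ edges in total. Summing,
\[
|M|\le |B|+\sum_{i=1}^m\frac{|V(G_i)|-1}{2}=s,
\]
so $\nu(G)\le s$ and $G$ is $M_{s+1}$-free.

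For the forward implication ($\Rightarrow$), assume $G$ is $M_{s+1}$-free, so $\nu(G)\le s$. I would apply the Gallai--Edmonds decomposition to $G$: let $D\subseteq V(G)$ be the set of vertices missed by some maximum matching, $A=N_G(D)\setminus D$, and $C=V(G)\setminus(A\cup D)$. The Gallai--Edmonds theorem asserts that every component of $G[D]$ is factor-critical (in particular odd), that $G[C]$ admits a perfect matching, and that $\nu(G)=\tfrac12(|V(G)|-c(D)+|A|)$, where $c(D)$ denotes the number of components of $G[D]$. Starting from $B_0:=A$, the components of $G-B_0$ inside $D$ are already odd; the even contribution from $C$ is repaired by fixing a perfect matching of $G[C]$ and adding one endpoint of each matched pair to $B_0$. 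The resulting set $B$ has the property that every component of $G-B$ is odd, and a direct computation yields $|B|+\sum(|V(G_i)|-1)/2=\nu(G)$.

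If $\nu(G)=s$, we are done. Otherwise $\nu(G)<s$, and the sum needs to be inflated by $s-\nu(G)$. Since the theorem is invoked in the regime where $|V(G)|$ is large compared with $s$, there is room to do this: I would iteratively move a carefully chosen vertex into $B$ so that the running sum $|B|+\sum(|V(G_i)|-1)/2$ increases by exactly one at each step while the odd-component invariant is preserved. The simplest move is to migrate an isolated vertex of $G-B$ into $B$; when no such vertex exists, one instead selects a vertex inside a factor-critical component of size at least three and applies Gallai--Edmonds internally to that component to split it into odd subcomponents with a controlled parity balance. The main obstacle of the proof is precisely this inflation step, which amounts to verifying that a suitable vertex can always be chosen so that the resulting components of $G-B$ remain odd; the factor-critical structure provided by Gallai--Edmonds is what makes this choice possible.
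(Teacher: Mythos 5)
The paper itself offers no proof of this statement: it is the classical Tutte--Berge theorem, quoted from Lov\'asz--Plummer with a citation only, so there is nothing in the paper to compare your argument against. Judged on its own merits, your backward direction is correct and complete, and the forward direction has the right skeleton (Gallai--Edmonds plus an inflation step), but two of its concrete steps fail as written.

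First, the repair of $C$. You place one endpoint of each edge of a perfect matching of $G[C]$ into $B$. Take a component $G[C]=K_4$: whichever two vertices you delete, the remaining two span an edge, so $G-B$ acquires an even component and the quantity $\sum_i(|V(G_i)|-1)/2$ is not even an integer. (The correct move for $K_4$ is to delete a \emph{single} vertex, leaving the factor-critical $K_3$, which contributes $1+(3-1)/2=2=\nu(K_4)$.) The clean fix is recursive rather than matching-based: for each component $K$ of $G[C]$, move one vertex $v$ into $B$ and apply the statement inductively to $K-v$, which satisfies $\nu(K-v)=|K|/2-1$; this preserves both the all-odd invariant and the count.

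Second, the inflation step, which you yourself flag as the main obstacle but do not actually carry out. Migrating an isolated vertex of $G-B$ into $B$ does raise the count by exactly one, but when no isolated vertex exists you cannot in general move a single vertex out of a factor-critical component while keeping all components odd: deleting one vertex of a $C_5$ component leaves $P_4$, an even component, so the ``one vertex at a time'' iteration breaks. One must instead move vertices in controlled batches (for instance two adjacent vertices of the $C_5$, changing the contribution from $2$ to $2+1=3$), and a uniform argument that every integer in $[\nu(G),\,|V(G)|]$ is attainable still needs to be written out. Relatedly, the equality $=s$ is only attainable when $s\le |V(G)|$, so the statement implicitly assumes $n$ large relative to $s$ --- the regime in which the paper applies it --- and any complete proof should say so.
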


 To prove our result, we will use the following results.

\begin{proposition}[\cite{alon2016many}]\label{prop:erdos-stone}
  For any graph $H$, $\ex(n, K_r, H)=\Omega(n^r)$ if and only if $\chi(H)>r$.
\end{proposition}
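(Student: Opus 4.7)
The plan is to prove the two directions separately. The ``if'' direction is a one-line construction, while the ``only if'' direction reduces to a hypergraph-extremal fact.

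For the sufficient direction, suppose $\chi(H)>r$. The Tur\'an graph $T_r(n)$ is $r$-colorable, so every subgraph of it has chromatic number at most $r$ and in particular cannot contain a copy of $H$. Picking one vertex from each of the $r$ parts of $T_r(n)$ produces a copy of $K_r$, and the number of such transversals is at least $\lfloor n/r\rfloor^r=\Omega(n^r)$. Hence $\ex(n,K_r,H)\ge \mathcal{N}(K_r,T_r(n))=\Omega(n^r)$.

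For the necessary direction I would prove the contrapositive: if $\chi(H)\le r$, then $\ex(n,K_r,H)=o(n^r)$. Since $H$ is $r$-colorable, it embeds into the complete balanced $r$-partite graph $K_r(t,\dots,t)$ (with $r$ parts of size $t$) for some sufficiently large integer $t$. Consequently every $H$-free graph is also $K_r(t,\dots,t)$-free, so $\ex(n,K_r,H)\le \ex(n,K_r,K_r(t,\dots,t))$, and it suffices to show that the right-hand side is $o(n^r)$.

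To do this I would pass to the $r$-uniform hypergraph $\mathcal{H}$ on $V(G)$ whose hyperedges are the vertex sets of the $K_r$-copies in $G$. If $\mathcal{N}(K_r,G)\ge cn^r$, then $\mathcal{H}$ has $\Omega(n^r)$ hyperedges, and by the Erd\H{o}s theorem on complete $r$-partite $r$-uniform hypergraphs (the $r$-uniform generalization of K\H{o}v\'ari--S\'os--Tur\'an), $\mathcal{H}$ contains a complete $r$-partite sub-hypergraph on disjoint sets $V_1,\dots,V_r$ of size $t$. Unfolding, every transversal $\{v_1,\dots,v_r\}$ with $v_i\in V_i$ is a clique in $G$, which forces all pairs between distinct $V_i$'s to be edges of $G$; hence $G\supseteq K_r(t,\dots,t)\supseteq H$, a contradiction. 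The main obstacle is precisely this hypergraph embedding step. Citing the Erd\H{o}s bound is cleanest, but one can alternatively induct on $r$ with base case K\H{o}v\'ari--S\'os--Tur\'an for $K_{t,t}$, using that the average vertex $v$ has $\mathcal{N}(K_{r-1},G[N(v)])=\Omega(n^{r-1})$ and amalgamating common neighborhoods of many such $K_{r-1}$-rich subsets to build the $r$ parts one at a time.
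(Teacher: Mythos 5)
Your proof is correct. Note that the paper itself gives no argument for this proposition: it is quoted directly from Alon and Shikhelman \cite{alon2016many}, so the only meaningful comparison is with the standard proof in that reference, and your argument is essentially that proof. The lower bound via $T_r(n)$ (which is $H$-free when $\chi(H)>r$ and contains at least $\lfloor n/r\rfloor^{r}=\Omega(n^r)$ copies of $K_r$) is exactly the canonical construction, and your contrapositive for the other direction --- embed $H$ into the blow-up $K_r(t,\ldots,t)$, form the $r$-uniform hypergraph whose edges are the vertex sets of $K_r$-copies, and apply Erd\H{o}s's theorem on complete $r$-partite $r$-uniform hypergraphs (which needs only $n^{r-\varepsilon}$ edges, so $\Omega(n^r)$ amply suffices) --- is the same reduction used there. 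The unfolding step is sound: since the parts $V_1,\ldots,V_r$ are disjoint and every transversal is the vertex set of a clique of $G$, any pair of vertices in distinct parts can be completed to a transversal, so all cross pairs are edges and $G$ contains $K_r(t,\ldots,t)\supseteq H$. Your closing remark about an inductive alternative is only a sketch, but it is not needed; citing the Erd\H{o}s bound is the clean route.
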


\begin{lemma}[Dirac \cite{dirac1952some}]\label{Dirac}
  Let $G$ be a connected graph. If $P$ is a longest path of $G$ with ends $u$ and $v$, then
$$
|P| \geq \min \{|G|, d(u)+d(v)+1\} .
$$
\end{lemma}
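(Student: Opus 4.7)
The plan is to invoke the classical rotation--extension technique. Fix a longest path $P=v_1v_2\cdots v_k$ of $G$ with $u=v_1$ and $v=v_k$, and observe that because $P$ cannot be extended at either endpoint, every neighbour of $u$ and every neighbour of $v$ must lie on $V(P)$: if $u$ had a neighbour $w\notin V(P)$, then $wv_1v_2\cdots v_k$ would be a strictly longer path, and symmetrically for $v$. In particular $d(u),d(v)\le k-1$.

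Next I would prove the following intermediate claim: if $d(u)+d(v)\ge k$, then $V(P)$ admits a Hamiltonian cycle. Define $A=\{i:v_i\in N(u)\}$ and $B=\{i:v_{i-1}\in N(v)\}$. Since $G$ has no loops, both $A$ and $B$ are subsets of $\{2,\ldots,k\}$, with $|A|=d(u)$ and $|B|=d(v)$. As $|A|+|B|\ge k>k-1$, pigeonhole produces some index $i\in A\cap B$. The closed walk
$$v_1\,v_i\,v_{i+1}\cdots v_k\,v_{i-1}\,v_{i-2}\cdots v_2\,v_1,$$
obtained by splicing the chord $v_1v_i$, the suffix $v_iv_{i+1}\cdots v_k$, the chord $v_kv_{i-1}$, and the reversed prefix $v_{i-1}v_{i-2}\cdots v_2v_1$, is a cycle of length $k$ passing through every vertex of $P$.

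Finally, I would close the argument using connectedness. If $V(P)=V(G)$ then $|P|=|G|$ and we are done. Otherwise, connectedness of $G$ supplies an edge from some $w\notin V(P)$ to some $v_j\in V(P)$; starting at $w$ and traversing the Hamiltonian cycle from $v_j$ in an appropriate direction (stopping just before returning to $v_j$) yields a path on $k+1$ vertices, contradicting the maximality of $P$. Hence in this case we must have $d(u)+d(v)<k$, i.e.\ $|P|\ge d(u)+d(v)+1$. Combining the two cases gives $|P|\ge\min\{|G|,d(u)+d(v)+1\}$.

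The main bookkeeping to watch is verifying that $A$ and $B$ both genuinely sit inside the $(k-1)$-element set $\{2,\ldots,k\}$ (so that the pigeonhole count is legitimate) and checking that the spliced closed walk is a simple cycle visiting each $v_j$ exactly once. Both checks follow immediately from the indexing convention and the absence of loops, so I expect no serious obstacle beyond careful notation.
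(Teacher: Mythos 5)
Your argument is correct, and it is the standard rotation--extension proof of this classical lemma; note that the paper itself offers no proof here, since the statement is quoted as a known result of Dirac, so there is nothing in the text to compare against. The only point worth tightening is the degenerate case $k=2$: there the spliced closed walk $v_1v_2v_1$ is not a simple cycle, so the ``Hamiltonian cycle on $V(P)$'' claim fails as literally stated. This causes no real harm, because if $k=2$ and $V(P)\neq V(G)$ then connectedness already hands you a vertex $w$ adjacent to $v_1$ or $v_2$ and hence a path on three vertices, contradicting maximality directly; equivalently, the case $k=2$ with $V(P)\neq V(G)$ never survives to the cycle step. For $k\geq 3$ your index sets $A,B\subseteq\{2,\ldots,k\}$, the pigeonhole count, and the simplicity of the spliced cycle all check out, so the proof is sound.
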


\begin{lemma}[Chakraborti and Chen \cite{chakraborti2021many}]\label{2021C}
  Let $r$, $w$, $x$, $y$, and $z$ be non-negative integers such that $r \geq 2$, $x+y=w+z$, $x \geq w$, $x \geq z$, and $x \geq r$.
  Then, $$\binom{x}{r}+\binom{y}{r} \geq\binom{w}{r}+\binom{z}{r}.$$
  Moreover, the inequality is strict if $x>w$ and $x>z$.
\end{lemma}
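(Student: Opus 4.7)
The plan is to prove the inequality by an incremental exchange argument that amounts to the convexity of $n \mapsto \binom{n}{r}$. Since the claim is symmetric under $w \leftrightarrow z$, I would assume without loss of generality that $w \geq z$. The identity $y = w + z - x$ combined with $x \geq w$ then gives $y \leq z$, so the four numbers are ordered as $y \leq z \leq w \leq x$.

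Next I would introduce the intermediate pairs $(a_i, b_i) := (w + i, z - i)$ for $i = 0, 1, \ldots, k$, where $k := x - w \geq 0$; these interpolate between $(a_0, b_0) = (w, z)$ and $(a_k, b_k) = (x, y)$. Setting $S_i := \binom{a_i}{r} + \binom{b_i}{r}$ and applying the Pascal-type identity $\binom{n+1}{r} - \binom{n}{r} = \binom{n}{r-1}$ to each increment produces the clean telescoping expression
\[
S_{i+1} - S_i \;=\; \binom{a_i}{r-1} \;-\; \binom{b_i - 1}{r-1}.
\]
Because $a_i - (b_i - 1) = (w - z) + 2i + 1 \geq 1$ and $n \mapsto \binom{n}{r-1}$ is non-decreasing on $\mathbb{Z}_{\geq -1}$ with the convention $\binom{-1}{r-1} = 0$, each term is $\geq 0$. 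Summing from $i = 0$ to $k - 1$ yields the weak inequality $\binom{x}{r} + \binom{y}{r} = S_k \geq S_0 = \binom{w}{r} + \binom{z}{r}$.

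For the strict version under $x > w$ and $x > z$, I would argue that the final step $i = k - 1$ contributes a strictly positive amount. At this index $a_{k-1} = x - 1$ and $b_{k-1} - 1 = y$. From the two strict hypotheses, $2x \geq w + z + 2$, hence $y \leq x - 2$; and $x \geq r$ implies $x - 1 \geq r - 1$, so $\binom{x-1}{r-1} \geq 1$. A short case split handles the comparison with $\binom{y}{r-1}$: if $y \geq r - 1$ then strict monotonicity of $\binom{\cdot}{r-1}$ on $[r-1, \infty)$ combined with $y \leq x - 2 < x - 1$ gives a strict inequality, and if $y < r - 1$ then $\binom{y}{r-1} = 0 < \binom{x-1}{r-1}$. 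Either way $S_k > S_{k-1} \geq S_0$, completing the strict case.

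The main obstacle is making sure the boundary cases for strictness are covered cleanly: when $y$ is small relative to $r$ the right-hand binomial vanishes, so one must verify that the left-hand one is nonzero, which is exactly what the hypothesis $x \geq r$ guarantees at the final step. Everything else is bookkeeping around the telescoping identity.
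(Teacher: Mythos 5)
Your proof is correct. Note that the paper does not actually prove this statement: Lemma~2.4 is quoted verbatim from Chakraborti and Chen \cite{chakraborti2021many}, so there is no in-paper argument to compare against. Your telescoping argument is the natural elementary proof (it is essentially the discrete convexity of $n\mapsto\binom{n}{r}$, which is also the idea behind the original source): after the symmetric reduction to $y\le z\le w\le x$, each exchange step contributes $\binom{a_i}{r-1}-\binom{b_{i+1}}{r-1}\ge 0$ because $a_i-b_{i+1}=w-z+2i+1\ge 1$, and the final step is strictly positive under $x>w$, $x>z$ since then $y\le x-2$ and $x\ge r$ forces $\binom{x-1}{r-1}\ge 1$. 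All boundary cases check out; in fact the convention $\binom{-1}{r-1}=0$ you introduce is never invoked, since for every step $i\le k-1$ one has $b_i-1=b_{i+1}\ge b_k=y\ge 0$, so every binomial appearing in the telescoping has a non-negative upper index. The only hypothesis you should make sure to mention explicitly when writing this up is that $y\ge 0$ (it is, since all five parameters are assumed to be non-negative integers), as this is what keeps the chain $(a_i,b_i)$ inside the domain where Pascal's rule applies.
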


\begin{lemma}[Vandermonde's identity]\label{lmm:Vandermonde}
  For any nonnegative integers $r$, $m$ and $n$, we have
  $$\binom{m+n}{r}=\sum_{i=0}^r\binom{m}{i}\binom{n}{r-i}.$$
\end{lemma}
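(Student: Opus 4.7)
The plan is to give a direct combinatorial double-counting argument. Fix a set $S$ of $m+n$ elements, partitioned into disjoint subsets $A$ and $B$ of sizes $m$ and $n$ respectively. The left-hand side $\binom{m+n}{r}$ counts the number of $r$-element subsets of $S$. For the right-hand side, I would partition this collection according to how many elements of each chosen subset lie in $A$: for each $i$ with $0\le i\le r$, the $r$-subsets of $S$ having exactly $i$ elements in $A$ and $r-i$ in $B$ are in bijection with pairs $(X,Y)$ where $X$ is an $i$-subset of $A$ and $Y$ is an $(r-i)$-subset of $B$, so there are $\binom{m}{i}\binom{n}{r-i}$ of them. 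Summing over $i$ yields the identity.

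A minor technical point worth recording is the convention $\binom{k}{j}=0$ when $j>k$ or $j<0$. Under this convention the range $0\le i\le r$ stated in the lemma is formally valid for all nonnegative integers $m$, $n$, $r$: if $i>m$ or $r-i>n$ then the corresponding term contributes zero, matching the fact that no $r$-subset can contain more than $m$ elements of $A$ or more than $n$ elements of $B$.

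I do not expect any obstacle here; Vandermonde's identity is classical and the bijective argument above is complete in essentially one paragraph. As an alternative that avoids invoking the partition convention, one can give a generating-function proof by expanding both sides of $(1+x)^{m+n}=(1+x)^m(1+x)^n$ using the binomial theorem: the coefficient of $x^r$ on the left is $\binom{m+n}{r}$, while the coefficient on the right, obtained by the distributive law, is $\sum_{i=0}^{r}\binom{m}{i}\binom{n}{r-i}$. Either route settles the lemma without case analysis.
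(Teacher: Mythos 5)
Your proof is correct and complete. The paper itself states Vandermonde's identity as a classical preliminary without supplying any proof, so there is nothing to compare against; both of your arguments — the double count of $r$-subsets of an $(m+n)$-set split by how many elements fall in the $m$-part, and the coefficient extraction from $(1+x)^{m+n}=(1+x)^m(1+x)^n$ — are the standard, fully rigorous routes, and your remark about the convention $\binom{k}{j}=0$ for $j>k$ or $j<0$ correctly justifies the summation range $0\le i\le r$ for all nonnegative $m$, $n$, $r$.
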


\section{Forbidding non-bipartite graphs}\label{section3}

  In this section, we first prove \cref{thm:stability}, and then use it to prove \cref{cor1} and  \cref{cor2}.

\begin{proof}[\textbf{Proof of \cref{thm:stability}.}]
  Let $F$ be a graph such that $\chi(F) \geq r\geq 3$ and $\mathrm{ex}(s, K_{r-1},\mathcal{D}(F))-\mathrm{ex}(s-1, K_{r-1},\mathcal{D}(F))>z$.
  Consider a graph $G$  on $n$ vertices that is $\{F, M_{s+1}\}$-free and satisfies $$\mathcal{N}(K_r,G)\geq (n-s)\mathrm{ex}(s, K_{r-1},\mathcal{D}(F))-z n.$$
  Additionally, assume that $G$ does not contain isolated vertices.

  By Tutte-Berge Theorem (\cref{Tutte}), there is a set $B \subseteq V(G)$ such that all the components $G_1, \ldots, G_m$ of $G-B$ are odd, and
\begin{align}\label{eq:B}
 |B|+\sum_{i=1}^m \frac{|V(G_i)|-1}{2}=s.
\end{align}

  We define $W$ as the set of isolated vertices in $G-B$.
  Let $t=\sum\limits_{|V(G_i)|>1}|V(G_i)|$.
  It follows from \eqref{eq:B} that $t\le 3s$. Thus, we have
\begin{align}\label{eq:W}
  | W | = n - t - | B | \geq n - 4s.
\end{align}

  Since each $w \in W$ only has neighbours in $B$, we have $d(w) \leq | B | \leq s$.
  Then $W$ can be divided in to disjoint union $W_1\cup\cdots\cup W_s$ according to the degrees of its vertices, where
  $$W_i:=\{w\in W:d(w)=i\}.$$

  For any $i\in[s]$, we define $$\mathcal{R}(i):=\{R\subseteq B: R=N(v)\text{ for some } v\in W_i\}$$ as the set of $i$-subsets of $B$ which is the neighbourhood of some vertex $v$ in $W_i$.
  We divide $\mathcal{R}(i)$ into two parts based on the number of common neighbours of $R$ in $W_i$.
  Let
\begin{align*}
  \mathcal{R}'(i):=\{R\in \mathcal{R}(i):|N(R)\cap W_i|\geq |F|\} \text{\quad and\quad } \mathcal{R}''(i):=\{R\in \mathcal{R}(i):|N(R)\cap W_i|< |F|\}.
\end{align*}

     Let $W_i'=\{v\in W_i: N(v)\in \mathcal{R}'(i)\}$ and $W_i''=W_i\setminus W_i'$.
     The following claim gives the properties of $W_i'$ and $W_i''$.
\begin{claim}\label{cl:Wi'}
 (a) For any $v\in W_i'$, $G[N(v)]$ is $\mathcal{D}(F)$-free;
   (b) $|W_i''|<\binom{s}{i}|F|$.
\end{claim}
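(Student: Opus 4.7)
Both parts will rest on a single degree-matching observation: if $R \subseteq B$ has $|R| = i$ and $v \in W_i$ is a common neighbor of $R$, then $v$ is adjacent to all of $R$ while $d(v) = i = |R|$, forcing $N(v) = R$ exactly. In particular, the sets $\{v \in W_i : N(v) = R\}$ and $N(R) \cap W_i$ coincide, so their sizes agree.

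For (a) I will argue by contradiction. Fix $v \in W_i'$ with $R = N(v)$; by definition $|N(R) \cap W_i| \geq |F|$. I first note that $W$ is independent in $G$, because any edge with both ends in $W$ would persist in $G - B$ and contradict the isolation of the vertices of $W$ there. Suppose now that $G[R]$ contains a copy of some $D \in \mathcal{D}(F)$, and write $V(F) = V(D) \sqcup I$ with $I$ independent in $F$ and $D = F - I$. Since $|I| \le |F|-1$, I can select $|I|$ vertices from $N(R) \cap W_i \setminus \{v\}$ to play the role of $I$: they form an independent set (as $W$ is independent in $G$), and each is adjacent to every vertex of $R \supseteq V(D)$, which provides more adjacencies than needed to respect the edges of $F$ running between $I$ and $V(D)$. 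This yields a copy of $F$ in $G$, contradicting $F$-freeness.

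Part (b) is then a direct count: $\mathcal{R}''(i)$ consists of $i$-subsets of $B$, so $|\mathcal{R}''(i)| \leq \binom{|B|}{i} \leq \binom{s}{i}$, and by the opening observation $|W_i''| = \sum_{R \in \mathcal{R}''(i)} |N(R) \cap W_i| < \binom{s}{i}|F|$. The only substantive step is the embedding argument in (a); the point to be careful about is to verify that the common neighborhood $N(R) \cap W_i$ simultaneously supplies the independence (inherited from $W$) and the complete adjacency to $V(D)$ needed to extend a copy of $D$ to a copy of $F$, which is precisely what it does.
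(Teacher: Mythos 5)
Your proof is correct and follows essentially the same route as the paper: part (a) is the same common-neighbourhood embedding (the paper states it tersely, you spell out the embedding of the deleted independent set $I$ into $N(R)\cap W_i$), and part (b) is the same pigeonhole count over the at most $\binom{s}{i}$ possible neighbourhood sets, each realized by fewer than $|F|$ vertices of $W_i$. No gaps; your extra observation that $N(R)\cap W_i=\{v\in W_i: N(v)=R\}$ is exactly what the paper uses implicitly.
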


\begin{pf}
  (a).
   Let $v\in W_i'$ and $R_v$ be the neighbourhood set of $v$.
  Then $|N(R_v)|\geq |N(R_v)\cap W_i|\geq |F|$ by the definitions of $W_i'$ and $\mathcal{R}'(i)$.
  Therefore, $G[R_v]$ is $\mathcal{D}(F)$-free. Otherwise, $G$ contains a copy of $F$ since $R_v$ has at least $|F|$ common neighbours.

   (b). By the definition of $\mathcal{R}''(i)$, for each $R\in \mathcal{R}''(i)$, there are at most $|F|-1$ vertices in $W_i$ whose neighbourhood set is  $R$.
   Since there are at most $\binom{s}{i}$ choices of $R$,  $|W_i''|\le \binom{s}{i}(|F|-1)<\binom{s}{i}|F|$.
\end{pf}

  Note that $W=\bigcup\limits_{i=1}^s W_i=(\bigcup\limits_{i=1}^s W_i')\cup (\bigcup\limits_{i=1}^s W_i'')$.
  Define $W'=\bigcup\limits_{|W'_i|\ge \binom{s}{i}|F|}W'_i$ and $W''=W\backslash W'$.
  Then, by \cref{cl:Wi'} (b), we have
\begin{align}\label{eq:W''}
  |W''|&= \sum\limits_{|W'_i|< \binom{s}{i}|F|}|W'_i|+\sum\limits_{i=1}^s|W_i''|\nonumber\\[2mm]
  &< \sum\limits_{i=1}^s\binom{s}{i}|F|+\sum\limits_{i=1}^s\binom{s}{i}|F|\nonumber\\[2mm]
  &<2^{s+1}|F|.
\end{align}

   To prove \cref{thm:stability} (i), let $$c_1:=\ex(s, K_{r-1},\mathcal{D}(F))-\ex(s-1, K_{r-1},\mathcal{D}(F))-z>0.$$

   The next claim shows that $|W_s|$ is not too small.

\begin{claim}\label{cl:W_s}
  $|W_s| \geq \max\{|F|,s+2\}$.
\end{claim}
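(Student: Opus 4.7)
The plan is to argue by contradiction: assume $|W_s| \le M - 1$, where $M = \max\{|F|, s+2\}$ is a constant, and derive an upper bound on $\mathcal{N}(K_r, G)$ that, combined with the hypothesized lower bound, forces $n$ itself to be bounded, contradicting $n > n_0$.

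Since $W$ is an independent set in $G$ (being the isolated vertices of $G - B$), every copy of $K_r$ meets $W$ in at most one vertex, so
$$\mathcal{N}(K_r, G) \le \binom{|V(G) \setminus W|}{r} + \sum_{w \in W} \mathcal{N}(K_{r-1}, G[N(w)]).$$
The first term is bounded by $\binom{4s}{r}$, a constant. By \eqref{eq:W''}, the contribution from $W''$ to the sum is at most $|W''| \binom{s}{r-1}$, also a constant. For $w \in W'$, \cref{cl:Wi'}(a) gives $\mathcal{N}(K_{r-1}, G[N(w)]) \le \ex(d(w), K_{r-1}, \mathcal{D}(F))$. We bound the contribution from $W_s$ by $|W_s| \cdot \ex(s, K_{r-1}, \mathcal{D}(F))$, and the contribution from $W' \setminus W_s$ (where $d(w) \le s-1$) by $|W| \cdot \ex(s-1, K_{r-1}, \mathcal{D}(F))$. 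Combining these with the hypothesis $\mathcal{N}(K_r, G) \ge (n-s) \ex(s, K_{r-1}, \mathcal{D}(F)) - zn$ and using $|W_s| \le M - 1$ yields, after rearrangement, $n \cdot c_1 \le C_0$, where $c_1 = \ex(s, K_{r-1}, \mathcal{D}(F)) - \ex(s-1, K_{r-1}, \mathcal{D}(F)) - z > 0$ by the hypothesis of (i), and $C_0$ is a constant depending only on $s, r, F$ and $z$. For $n$ sufficiently large, this is a contradiction.

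The main technical hurdle is justifying $\mathcal{N}(K_{r-1}, G[N(w)]) \le \ex(s-1, K_{r-1}, \mathcal{D}(F))$ for $w \in W' \setminus W_s$ when $d(w) < s - 1$, since \cref{cl:Wi'}(a) only supplies $\ex(d(w), K_{r-1}, \mathcal{D}(F))$. The plan is to observe that whenever $|W_s| \ge |F|$ (in particular $|B| = s$), the common-neighbour embedding argument from the proof of \cref{cl:Wi'}(a), applied to $R = B$ with the $W_s$-vertices serving as the $|F|$ required common neighbours, forces $G[B]$ itself to be $\mathcal{D}(F)$-free. Then for any $v \in B \setminus N(w)$, $G[N(w)] \subseteq G[B \setminus \{v\}]$, a $\mathcal{D}(F)$-free subgraph on $s-1$ vertices, which yields the required bound. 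In the complementary regime $|W_s| < |F|$, we already have $|W_s| < M$, so that sub-case is absorbed into the same contradiction after noting that $W_s \subseteq W''$ there and shifting its contribution into the constant term.
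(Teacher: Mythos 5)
Your global strategy is the same as the paper's: assume $|W_s|\le\max\{|F|,s+2\}-1$, decompose $\mathcal{N}(K_r,G)$ over the independent set $W$ plus the constant term $\binom{n-|W|}{r}\le\binom{4s}{r}$, bound the per-vertex contributions via \cref{cl:Wi'} and \eqref{eq:W''}, and conclude $nc_1\le C_0$ with $c_1=\ex(s,K_{r-1},\mathcal{D}(F))-\ex(s-1,K_{r-1},\mathcal{D}(F))-z>0$, contradicting $n>n_0$. One point where you are actually cleaner than the paper: you absorb the whole $W_s$-contribution into a constant directly from $|W_s|\le\max\{|F|,s+2\}-1$, whereas the paper infers $W_s\subseteq W''$ from $|W_s|<\max\{|F|,s+2\}$, which requires $|W_s'|<\binom{s}{s}|F|=|F|$ and is therefore only immediate when $|F|\ge s+2$; your bookkeeping avoids that regime issue.

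However, the step you yourself call the main technical hurdle is only half-patched. Your fix --- $|W_s|\ge|F|$ forces $|B|=s$ and $N(w)=B$ for $w\in W_s$, hence $G[B]$ is $\mathcal{D}(F)$-free and $G[N(w)]\subseteq G[B\setminus\{v\}]$ gives the $\ex(s-1,K_{r-1},\mathcal{D}(F))$ bound for every $w$ of degree at most $s-1$ --- is correct, but it is available exactly when $|W_s|\ge|F|$, i.e.\ not in your complementary case. When $|W_s|<|F|$, noting $W_s\subseteq W''$ only disposes of the degree-$s$ vertices; for $w\in W'$ with $d(w)<s-1$ you are back to needing $\ex(d(w),K_{r-1},\mathcal{D}(F))\le\ex(s-1,K_{r-1},\mathcal{D}(F))$, which is precisely the monotonicity you flagged as non-obvious (and it is genuinely not automatic: members of $\mathcal{D}(F)$ may contain isolated vertices, so padding an extremal graph with isolated vertices need not preserve $\mathcal{D}(F)$-freeness). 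To be fair, the paper performs this very step silently --- it bounds every $W'$-vertex by $\ex(s-1,K_{r-1},\mathcal{D}(F))$ without comment --- so your argument is no weaker than the published one; but by your own framing the hurdle remains open in the case $|W_s|<|F|$, and to close it you must either establish the needed monotonicity for the family $\mathcal{D}(F)$ or supply a different per-vertex bound there, rather than declaring that case absorbed into the constant term.
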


\begin{pf}
   If $|W_s|< \max\{|F|,s+2\}<\binom{s}{i}|F|$, then $W_s\subseteq W''$, which implies that each vertex in $W'$ has degree at most $s-1$.
  Combining this with \cref{cl:Wi'} (i), for any $v\in W'$, the number of $r$-cliques containing $v$ is $\mathcal{N}(K_{r-1},G[N(v)])\leq \mathrm{ex}(s-1,K_{r-1},\mathcal{D}(F))$.
  Note that each vertex in $W''\subseteq W$ has degree at most $s$. Thus
\begin{align*}
   \mathcal{N}(K_r,G)
  &\leq |W'|\ex(s-1,K_{r-1},\mathcal{D}(F))+|W''|\binom{s}{r-1}+\binom{n-|W|}{r}\\
  &< n\cdot(\ex(s,K_{r-1},\mathcal{D}(F)-z-c_1)+2^{s+1}|F|\binom{s}{r-1}+\binom{4s}{r} \quad \text{(by \eqref{eq:W} and \eqref{eq:W''})}\\
  &<(n-s)\ex(s,K_{r-1},\mathcal{D}(F))- z n,
\end{align*}
when $n>(2^{s+1}|F|\binom{s}{r-1}+\binom{4s}{r}+s\cdot\ex(s,K_{r-1},\mathcal{D}(F)))/c_1$, leading to a contradiction.
\end{pf}

   By \cref{cl:W_s}, $|W_s| \geq \max\{|F|,s+2\}$, which implies that $|B| = s$ and the vertices in $B$ have at least $s+2$ common neighbours.
  If there exists an edge in $G-B$, we can greedily find an $(s+1)$-matching as each vertex in $B$ has degree at least $s+2$.
  Thus, $V(G)\backslash B$ forms an independent set of size $n-s$.
  This concludes the proof of \cref{thm:stability} (i).

  Next, we prove \cref{thm:stability} (ii).
  If $\ex(s, K_{r-1},\mathcal{D}(F))-\mathcal{N}(K_{r-1},H)> z$ holds for all $F$-free graphs $H$ with $s$ vertices that are not in $\mathrm{EX}(s, K_{r-1},\mathcal{D}(F))$, then let $$c_2:=\ex(s, K_{r-1},\mathcal{D}(F))-\max\limits_{\substack{F\not\subseteq H,~|H|=s\text{ and} \\
\text{$H\notin\mathrm{EX}(s, K_{r-1},\mathcal{D}(F))$}}}\{\mathcal{N}(K_{r-1},H)\}-z>0.$$

  Since $B$ has $|W_s|\ge |F|$ common neighbours, $G[B]$ must be $\mathcal{D}(F)$-free.
  Then, we will prove that $G[B]\in \mathrm{EX}(s, K_{r-1},\mathcal{D}(F))$.
  If $G[B]\notin \mathrm{EX}(s, K_{r-1},\mathcal{D}(F))$, then
\begin{align*}
   \mathcal{N}(K_r,G)
   & \leq |W|\mathcal{N}(K_{r-1},G[B])+\mathcal{N}(K_{r},G[B])\\
  &\leq (n-s)(\mathrm{ex}(s,K_{r-1},\mathcal{D}(F))-z-c_2)+\binom{s}{r}\\
  &<(n-s)\mathrm{ex}(s,K_{r-1},\mathcal{D}(F))-z n
\end{align*}
 when $n>(\binom{s}{r}+sz+sc_2)/c_2$, resulting in a contradiction.
 Set $D=G[B]\in \mathrm{EX}(s, K_{r-1},\mathcal{D}(F))$.
 Recall that $V(G) \setminus B$ is an independent set of size $n-s$.
 So $G$ is a subgraph of $D \vee I_{n-s}$, we complete the proof of \cref{thm:stability}  (ii).
\end{proof}

 Using \cref{thm:stability}, we can prove \cref{cor1}.

\begin{proof}[\textbf{Proof of \cref{cor1}}]

  Assume $G$ is an $\{F, M_{s+1}\}$-free graph with $\mathcal{N}(K_r, G)=\ex(n, K_r,\{F, M_{s+1}\})$.
  It is easily seen that $D_F(s, r) \vee I_{n-s}$ is $\{F, M_{s+1}\}$-free. So
\begin{align}\label{eq:444}
  \ex(n, K_r,\{F, M_{s+1}\}) \geq \mathcal{N}(K_r, D_F(s, r) \vee I_{n-s}).
\end{align}
  On the other hand, the assumption that $\ex(s, K_{r-1}, \mathcal{D}(F))>\ex(s-1, K_{r-1}, \mathcal{D}(F))$ implies that
$$
\ex(s, K_{r-1}, \mathcal{D}(F))-\ex(s-1, K_{r-1}, \mathcal{D}(F))>1 / 2.
$$
Obviously, for all $\mathcal{D}(F)$-free graphs $H$ with $s$ vertices that are not in $\mathrm{EX}(s, K_{r-1}, \mathcal{D}(F))$, we have
$$
\ex(s, K_{r-1}, \mathcal{D}(F))-\mathcal{N}\left(K_{r-1}, H\right)>1 / 2.
$$
By \cref{thm:stability}, $G$ is a subgraph of $D \vee I_{n-s}$ for some $D \in \mathrm{EX}(s, K_{r-1}, \mathcal{D}(F))$. So
\begin{align}\label{eq:555}
\ex(n, K_r,\{F, M_{s+1}\}) \leq \mathcal{N}(K_r, D_F(s, r) \vee I_{n-s}).
\end{align}
Combining with \eqref{eq:444} and \eqref{eq:555}, we get
$$
\ex(n, K_r,\{F, M_{s+1}\})=\mathcal{N}(K_r, D_F(s, r) \vee I_{n-s}).
$$
\end{proof}

\begin{proposition}\label{lmm:3.2}
   Let $r\ge 3$ be an integer.
    Let $\mathcal{F}=\{F_1, \ldots, F_k\}$, where $\chi(F_1)=\cdots=\chi(F_k)\ge r$.
    Suppose $D$ is an $\mathcal{F}$-free graph on $n$ vertices such that $\mathcal{N}(K_{r-1},D)=\ex(n,K_{r-1},\mathcal{F})$.
    Let $t=|F_1|+\cdots+|F_k|$, then every vertex in $D$ is contained in at least $t^{r-2}$ copies of $K_{r-1}$.
\end{proposition}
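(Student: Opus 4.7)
The plan is to argue by contradiction via a Zykov-style vertex replacement. Suppose some vertex $v \in V(D)$ is contained in fewer than $t^{r-2}$ copies of $K_{r-1}$. Because $\chi(F_i) \ge r$ for every $i$, the Tur\'an graph $T_{r-1}(n)$ is $\mathcal{F}$-free (its chromatic number is $r-1 < r$), so
\[
\mathcal{N}(K_{r-1}, D) = \ex(n, K_{r-1}, \mathcal{F}) \ge \mathcal{N}(K_{r-1}, T_{r-1}(n)) = \Omega(n^{r-1})
\]
by \cref{prop:erdos-stone}. Hence $\mathcal{N}(K_{r-1}, D - v) = \Omega(n^{r-1})$, and averaging over the $n-1$ vertices of $V(D) \setminus \{v\}$ produces some $u \neq v$ contained in $M = \Omega(n^{r-2})$ copies of $K_{r-1}$ in $D - v$, which satisfies $M \gg t^{r-2}$ for $n$ sufficiently large.

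Next, form $D'$ by deleting $v$ from $D$ and inserting a new vertex $v'$ with $N_{D'}(v') := N_D(u) \setminus \{v\}$ and $v' u \notin E(D')$; that is, $v'$ is made a non-adjacent twin of $u$ in $D'$. The $K_{r-1}$'s through $v'$ in $D'$ biject with $(r-2)$-cliques in $N_{D - v}(u)$, so $v'$ lies in exactly $M$ copies of $K_{r-1}$, giving
\[
\mathcal{N}(K_{r-1}, D') = \mathcal{N}(K_{r-1}, D) - m_v + M > \mathcal{N}(K_{r-1}, D).
\]
This would contradict the extremality of $D$, provided $D'$ is also $\mathcal{F}$-free.

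The main obstacle is establishing this $\mathcal{F}$-freeness. Suppose some $F \in \mathcal{F}$ embeds in $D'$ via $\phi$. Since $D - v \subseteq D'$ is $\mathcal{F}$-free, $\phi$ must use $v'$; fix $\phi(x_0) = v'$. If $u \notin \phi(V(F))$, then defining $\phi'(x_0) := u$ and $\phi' := \phi$ elsewhere produces an injective embedding of $F$ into $D - v \subseteq D$ (since $N_{D'}(v') \subseteq N_D(u)$), contradicting the $\mathcal{F}$-freeness of $D$. The delicate case is $u \in \phi(V(F))$: say $\phi(y_0) = u$. Because $uv' \notin E(D')$, we must have $x_0 y_0 \notin E(F)$, so the embedding folds the non-edge $\{x_0, y_0\}$ onto the twin pair $\{u, v'\}$. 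The plan to dispose of this case is to refine the choice of $u$ from the outset: the ``twin-image'' candidates --- those $u \in V(D)\setminus\{v\}$ that can serve as $\phi(y_0)$ in such a fold for some $F \in \mathcal{F}$ and non-adjacent pair $\{x_0, y_0\} \subseteq V(F)$ --- correspond to images of the identified vertex in copies of the finitely many quotient graphs $F/\{x_0, y_0\}$ inside $D$, a restricted collection of structural constraints, whereas the averaging step leaves $\Omega(n)$ candidates with $K_{r-1}$-participation exceeding $t^{r-2}$. A careful pigeonhole over this much larger pool yields a safe $u$ for which the fold case cannot occur, and the contradiction follows.
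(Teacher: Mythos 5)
Your construction and clique count are fine up to the point you yourself flag: everything hinges on the false-twin graph $D'$ being $\mathcal{F}$-free, and the ``fold'' case (a copy of some $F\in\mathcal{F}$ using both $v'$ and $u$ on a non-adjacent pair $\{x_0,y_0\}$) is exactly where the argument has a genuine gap. Your proposed fix --- choose $u$ by pigeonhole among the $\Omega(n)$ vertices of high $K_{r-1}$-degree so as to avoid all ``twin-image candidates'' --- is not justified and cannot be made to work as stated: the set of vertices at which some quotient $F/\{x_0,y_0\}$ embeds in $D$ is not a priori small. Since $F/\{x_0,y_0\}$ need not belong to $\mathcal{F}$, the $\mathcal{F}$-freeness of $D$ gives no control over these copies; their ``identified-vertex'' images can occupy $\Omega(n)$ vertices and, worse, can include every vertex of large $K_{r-1}$-degree, so no ``safe'' $u$ need exist in your candidate pool. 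You give no counting argument bounding the bad set, and none is available in this generality, so the contradiction is not reached.

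The paper's proof avoids this obstacle by not duplicating an arbitrary vertex. It first uses \cref{prop:erdos-stone} to extract a complete balanced $(r-1)$-partite subgraph $T=K_{t,\dots,t}$ of $D$ (possible because $\mathcal{N}(K_{r-1},D)=\Theta(n^{r-1})$ while a $K_{t,\dots,t}$-free graph has $o(n^{r-1})$ copies of $K_{r-1}$), picks $u\in V(T)$ non-adjacent to $v$, and rewires $v$ so that $N_{D'}(v)=N_T(u)$. Then the fold problem disappears structurally: any copy of $F_i$ in $D'$ must use $v$, and since $u$'s colour class of $T$ has $t>|F_i|-1$ vertices, some class vertex $v'$ is unused by the copy and is adjacent in $D$ to all of $N_T(u)$, so $v$ can be swapped for $v'$ to produce $F_i$ inside $D'-v\subseteq D$, a contradiction --- no case analysis on whether $u$ is used is needed, because there are many interchangeable substitutes rather than the single vertex $u$ your twin construction relies on. The gain of at least $t^{r-2}$ cliques then comes from the $(r-2)$-partite structure of $N_T(u)$. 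If you want to salvage your write-up, you should replace the twin step by this ``rewire into a large $K_{t,\dots,t}$'' step (or supply a genuine argument bounding the fold candidates, which I do not believe exists in general).
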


\begin{proof}
    Since $\chi(F_i)\ge r$ for each $i\in[k]$, we have $$\mathcal{N}(K_{r-1},D)=\ex(n,K_{r-1},\mathcal{F})\ge\mathcal{N}(K_{r-1},T_{r-1}(n))=\Theta(n^{r-1}).$$
   Then $D$ must contain a copy of balanced complete $(r-1)$-partite graph $K_{t,t,\ldots,t}$, denoted by $T$, as a subgraph.
   Otherwise, $D$ is $K_{t,t,\ldots,t}$-free.
   By \cref{prop:erdos-stone}, $D$ has $o(n^{r-1})$ copies of $K_{r-1}$, a contradiction.

   Let $\mathcal{N}_v(K_{r-1}, D)$ be the number of $(r-1)$-cliques in $D$ containing $v$.
   Suppose to the contrary that there exists a vertex $v$ with $\mathcal{N}_v(K_{r-1}, D)<t^{r-2}$.
   Clearly, there is at least one vertex $u\in V(T)$ that is not adjacent to $v$.
   Otherwise, $\mathcal{N}_v(K_{r-1}, D)\ge \mathcal{N}(K_{r-2},T)>t^{r-2}$.
   We remove all edges incident to $v$ and add the edge set $E=\{vw:w\in N_T(u)\}$.
   Let $D'$ denote the resulting graph after these changes.
   We claim that $D'$ is also $\mathcal{F}$-free.
   Indeed, if $F_i\in\mathcal{F}$ appears, then $F_i$ must contain $v$.
   Since $t > |F_i| - 1$, we can find $v'$ in the same colour class as $u$ in $T$ to replace $v$, thus obtaining $F_i$ in $D'-v$.
   However, $D'-v$ is a subgraph of $D$, which contradicts the assumption that $D$ is $\mathcal{F}$-free.
   We removed $\mathcal{N}_v(K_{r-1}, D)$ copies of $K_{r-1}$, but added at least $t^{r-2}$ copies of $K_{r-1}$, then $\mathcal{N}(K_{r-1}, D')>\mathcal{N}(K_{r-1}, D)$ contradicting the maximality of $D$.
   Thus, every vertex in $D$ is contained in at least $t^{r-2}$ copies of $K_{r-1}$.
\end{proof}

\begin{proof}[\textbf{Proof of \cref{cor2}}]
  If $\chi(F)>r \geq 3$, then $\chi(F') \geq r$ for any graph $F' \in \mathcal{D}(F)$.
  We claim that $$\ex(s, K_{r-1},\mathcal{D}(F))-\ex(s-1, K_{r-1},\mathcal{D}(F))>0$$ when $s$ is sufficiently large.
  Otherwise,  $\ex(s, K_{r-1},$ $\mathcal{D}(F))-\ex(s-1, K_{r-1},\mathcal{D}(F))=0,$ then there exists an extremal graph $D\in \mathrm{EX}(s, K_{r-1},\mathcal{D}(F))$ and a vertex $v\in V(D)$ such that $v$ is not contained in any $r$-clique of $D$, which  contradicts \cref{lmm:3.2}.
   Hence, $\ex(s, K_{r-1},\mathcal{D}(F))-\ex(s-1, K_{r-1},\mathcal{D}(F))>0$.
   It follows from \cref{cor1} that $$\ex(n, K_r,\{F, M_{s+1}\})=\mathcal{N}(K_r,D_F(s,r)\vee I_{n-s}).$$
\end{proof}

\section{Forbidding bipartite graphs}\label{section:bipartite}

\subsection{Stability result}

  Let $F$ be a bipartite graph.
  Recall that $p = p(F)$ represents the smallest feasible order of a colour class in a proper two-colouring of $F$, and $q=|F|-p$.
  For cases where $p \leq s$, the subsequent lemma establishes an approximate structure for an extremal graph in $\mathrm{EX}(n, K_r, \{F, M_{s+1}\})$.

\begin{lemma}\label{lmm:stability-2}
  Let $s$ be an integer and $F$ be a bipartite with $p=p(F) \leq s$.
  For any sufficiently large $n$, there exist an integer $t\leq \binom{2s}{p}(q-1)+2s+1-p$ and an $\{F, M_{s+1}\}$-free graph $H$ on $n$ vertices with $\mathcal{N}(K_r,H)=\ex(n,K_r,\{F, M_{s+1}\})$ and a partition $V(H)=X\cup Y\cup Z$ that satisfies the following:
\begin{enumerate}[label=(\arabic*)]
    \item[\rm (i)] $H[X]=K_{p-1}$;
    \item[\rm (ii)] $Y$ is an independent set with $|Y|=n-t-p+1$ and every vertex in $Y$ has the neighbourhood $X$;
    \item[\rm (iii)] every vertex in $Z$ has degree at least $p$.
\end{enumerate}
\end{lemma}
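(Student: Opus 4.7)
The plan is to take an arbitrary extremal graph $G \in \mathrm{EX}(n, K_r, \{F, M_{s+1}\})$ and transform it into $H$ by a sequence of local edits that preserve $\mathcal{N}(K_r, \cdot)$ and $\{F, M_{s+1}\}$-freeness. First I would apply the Tutte--Berge theorem (\cref{Tutte}) to $G$, obtaining a witness $B \subseteq V(G)$ together with odd components $G_1, \ldots, G_m$ of $G - B$ satisfying $|B| + \sum_i (|V(G_i)| - 1)/2 = s$; let $W$ be the set of isolated vertices of $G - B$ and $B^* := V(G) \setminus W$. A preliminary cleanup -- absorbing the vertices of each non-trivial odd component into $B$ via edge swaps that never decrease $\mathcal{N}(K_r, \cdot)$ -- allows us to assume $|B^*| \le 2s$.

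Second, the inclusion $F \subseteq K_{p, q}$ with $q = |F| - p \ge p$ forces $|N(S)| \le q - 1$ for every $p$-subset $S \subseteq V(G)$, because otherwise $G \supseteq K_{p, q} \supseteq F$. Applied to $p$-subsets $R \subseteq B^*$ this gives $|\{w \in W : R \subseteq N(w)\}| \le q - 1$, and summing over the $\binom{|B^*|}{p} \le \binom{2s}{p}$ such $R$ yields the key bound
\[
  |\{w \in W : d(w) \ge p\}| \le \binom{2s}{p}(q - 1).
\]

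Third, I would select $X \subseteq B^*$ with $|X| = p - 1$ maximising $|W_X|$ where $W_X := \{w \in W : N_G(w) = X\}$, and construct $H$ from $G$ by: (a) adding every missing edge inside $X$ (this only increases $\mathcal{N}(K_r, \cdot)$ and preserves $\{F, M_{s+1}\}$-freeness since $K_{p-1}$ is trivially both); (b) resetting $N(w) := X$ for every $w \in W$ with $d_G(w) \le p - 1$ (safe because $\mathcal{N}(K_{r-1}, K_{p-1}) = \binom{p-1}{r-1} \ge \mathcal{N}(K_{r-1}, G[N_G(w)])$, and no copy of $K_{p, q}$ with $q$-side in $X$ can form since $|X| = p - 1 < q$); (c) resetting $N(v) := X$ for every $v \in B^* \setminus X$ with degree less than $p$. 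Defining $Y := \{v \in V(H) \setminus X : N_H(v) = X\}$ and $Z := V(H) \setminus (X \cup Y)$, conditions (i)--(iii) hold by construction, and the bound
\[
  |Z| \le |B^* \setminus X| + |\{w \in W : d_G(w) \ge p\}| \le (2s - p + 1) + \binom{2s}{p}(q - 1) = t
\]
delivers $|Y| \ge n - (p - 1) - t = n - t - p + 1$ (any surplus may be absorbed into $Z$ to obtain equality).

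The main obstacle I anticipate is verifying, through steps (a)--(c), that no new copy of $F$ is accidentally introduced via the interaction between the uniformised $Y$ and the heavy vertices of $Z$: one must rule out $K_{p, q}$ embeddings with $p$-side in $Z$ and $q$-side meeting $X \cup Y$, and the only structural lever for doing so is $|X| = p - 1 < p \le q$. Combining this case analysis with the verification that $B^*$ survives the edits as a Tutte--Berge witness for matching number $\le s$ is the delicate core of the argument.
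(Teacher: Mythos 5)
Your overall plan (start from an extremal graph, isolate a small set covering all matching edges, bound the number of high-degree vertices outside it by a $K_{p,q}$/pigeonhole argument, then rewire all low-degree vertices onto a fixed $(p-1)$-set $X$) is the same skeleton as the paper's proof, and your bound $|\{w: d(w)\ge p\}|\le\binom{2s}{p}(q-1)$ is fine. But the decisive step is missing: you never prove that there exists a $(p-1)$-set $X$ which \emph{already induces a clique in $G$} and has a \emph{large common neighbourhood} (at least $\max\{|F|+1,2s+3\}$ common neighbours). The paper obtains exactly this via extremality: comparing $\mathcal{N}(K_r,G)$ with $\mathcal{N}(K_r,K_{p-1}\vee I_{n-p+1})=\binom{p-1}{r-1}(n-p+1)+\binom{p-1}{r}$ and using that a vertex of degree at most $p-1$ whose neighbourhood is \emph{not} a $(p-1)$-clique lies in at most $\binom{p-1}{r-1}-1$ copies of $K_r$, one forces linearly many vertices of $W$ to have clique neighbourhoods, and pigeonhole over the at most $\binom{2s}{p-1}$ candidate sets yields the desired $X$. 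Your choice of $X$ maximising $|W_X|$ comes with no lower bound at all ($|W_X|$ could even be $0$), and your repair in step (a) -- adding the missing edges inside $X$ ``since $K_{p-1}$ is trivially $F$-free and $M_{s+1}$-free'' -- is not a valid justification: the new edges live inside $G$, and together with existing edges they can complete a copy of $F$ or raise $\nu(G)$ above $s$ (e.g.\ if the two endpoints are simultaneously missable by some maximum matching).

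The same missing ingredient undermines steps (b)--(c). Your only freeness argument is that no $K_{p,q}$ with its $q$-side inside $X$ can appear; but a new copy of $F$ need not be a full $K_{p,q}$, and you give no argument whatsoever that the matching number stays at most $s$ after rewiring (attaching a vertex to $X$ can create an augmenting edge whenever some vertex of $X$ is not essential for a maximum matching). The paper's mechanism is a substitution argument that \emph{requires} $|N_G(X)|\ge\max\{|F|+1,2s+3\}$: any copy of $F$ or $M_{s+1}$ in the edited graph through a rewired vertex $v$ can be rerouted through a common neighbour of $X$ outside the copy, producing the same forbidden subgraph inside $G$, a contradiction. Without the lower bound on $|N(X)|$ this rerouting is unavailable and the rewiring can genuinely create new copies of $F$ (already for $F=C_6$, $p=3$) or a larger matching. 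Two smaller points: ``absorbing surplus into $Z$'' is not allowed, since such vertices have degree $p-1<p$ and would violate (iii) -- instead one simply defines $t:=n-p+1-|Y|$ and checks $t\le\binom{2s}{p}(q-1)+2s+1-p$; and a single pass of (b)--(c) does not guarantee (iii), because rewiring one vertex can drop a neighbour's degree below $p$, so the process must be iterated (as the paper does) until no vertex has degree below $p-1$ and all degree-$(p-1)$ vertices have neighbourhood $X$. Your ``preliminary cleanup'' of odd components via unspecified edge swaps is also unjustified, but it is unnecessary: taking $U$ to be the vertex set of a maximum matching already gives $|U|\le 2s$ with $V(G)\setminus U$ independent.
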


\begin{proof}[\textbf{Proof of \cref{lmm:stability-2}}]
  Let $G$ be an $\{F, M_{s+1}\}$-free graph with $\mathcal{N}(K_r,G)=\ex(n,K_r,\{F, M_{s+1}\})$.
  Note that $K_{p-1} \vee I_{n-p+1}$ is $\{F, M_{s+1}\}$-free, leading to:
\begin{align}\label{eq:p-1}
  \mathcal{N}(K_r,G)\geq \mathcal{N}(K_r,K_{p-1}\vee I_{n-p+1})=\binom{p-1}{r-1}(n-p+1)+\binom{p-1}{r}.
\end{align}

   Let $U$ denote the set of vertices that are matched under a maximum matching of $G$.
  Then $|U| \leq 2s$, and $V(G) \setminus U$ forms an independent set.
  Let $L:=\{v\in V(G)\setminus U:d(v)\ge p\}$ be the set of vertices outside $U$ with degrees at least $p$.
  We first estimate the size of $L$.

\begin{claim}
   $|L|\le \binom{2s}{p}(q-1)$.
\end{claim}

\begin{pf}
  Suppose, for the sake of contradiction, that $|L| \ge \binom{2s}{p}(q-1)+1$.
  Assume, without loss of generality, that each vertex in $L$ has degree exactly $p$ (we can remove some edges to achieve this).
 Each vertex in $L$ corresponds to a $p$-subset of $U$ as its neighbourhood.
 Since $U$ has at most $\binom{2s}{p}$ $p$-subsets, by the Pigeonhole principle, there exist $q$ vertices in $L$ sharing $p$ common neighbours in $U$.
 Consequently, $G$ contains a $K_{p,q}$ and, therefore, contains an $F$, which leads to a contradiction.
\end{pf}

  Let $W=V(G)\setminus(U\cup L)$.
  Note every vertex in $W$ has degree at most $p-1$ and
\begin{align}\label{eq:777}
    |W| \geq n - 2s - \binom{2s}{p}(q-1).
\end{align}

\begin{claim}\label{cl:common-neighbour}
  There exists a subset $X\subseteq U$ such that
  \begin{align*}
   G[X]=K_{p-1} \quad \text{ and }\quad |N(X)|\ge \max\{|F|+1,2s+3\}.
\end{align*}
\end{claim}

\begin{pf}
  Let $W'\subseteq W$ be the set of vertices whose neighbourhood induces a $(p-1)$-clique.
  Then, for any vertex $v\in W\setminus W'$, the number of $r$-cliques containing $v$ is at most $\binom{p-1}{r-1}-1$.

  If $|W'|< \max\{|F|+1,2s+3\}\binom{2s}{p-1}$, then
\begin{align*}
  \mathcal{N}(K_r,G)
   & \leq |W'|\binom{p-1}{r-1}+|W\setminus W'|\left(\binom{p-1}{r-1}-1\right)+\binom{n-|W|}{r}\\[2mm]
   & < \max\{|F|+1,2s+3\}\binom{2s}{p-1}\binom{p-1}{r-1}+|W|\left(\binom{p-1}{r-1}-1\right)+\binom{n-|W|}{r}\\[2mm]
   & < \max\{|F|+1,2s+3\}\binom{2s}{p-1}\binom{p-1}{r-1}+n\left(\binom{p-1}{r-1}-1\right)+\binom{2s+\binom{2s}{p}(q-1)}{r}\quad \text{(by \eqref{eq:777})}\\[2mm]
   & = n\binom{p-1}{r-1}-\left(n-\max\{|F|+1,2s+3\}\binom{2s}{p-1}\binom{p-1}{r-1}-\binom{2s+\binom{2s}{p}(q-1)}{r}\right)\\[2mm]
   & < n\binom{p-1}{r-1}-\binom{p-1}{r-1}(p-1)+\binom{p-1}{r}\quad \text{(as $n$ is sufficiently large)}\\[2mm]
   & = \binom{p-1}{r-1}(n-p+1)+\binom{p-1}{r},
\end{align*}
 which contradicts \eqref{eq:p-1}.

  Therefore, $|W'| \geq \max\{|F| + 1, 2s + 3\}\binom{2s}{p-1}$.
  Since the number of $(p-1)$-sets in $U$ is at most $\binom{2s}{p-1}$, by double counting, there is at least one $(p-1)$-set, denoted as $X$, which is the neighbourhood of a vertex in $W'$ and has at least $\max\{|F|+1,2s+3\}$ common neighbours.
  That is, $|N(X)|\ge \max\{|F|+1,2s+3\}$.
  Moreover, by the definition of $W'$, $G[X]=K_{p-1}$.
\end{pf}

  If there exists a vertex $v$ in $V(G)$ with a degree at most $p-1$, we transform its neighbourhood into $X$, creating the graph $G'$.
  We claim that $G'$ is $\{F, M_{s+1}\}$-free.
  If $G'$ contains a copy of $F$ or $M_{s+1}$, referred to as $Q$, then $Q$ must include the vertex $v$.
  By \cref{cl:common-neighbour}, $|N(X)|\ge \max\{|F|+1,2s+3\}$.
  Therefore, there exists a vertex $v' \in N(X) \setminus V(Q)$ such that its neighbourhood is $X$.
  Then we replace vertex $v$ with $v'$.
  Since $v$ and $v'$ have identical neighbourhoods, this implies that there is a copy of $F$ or $M_{s+1}$ within $G' - v$.
  However, $G' - v$ is a subgraph of $G$, leading to a contradiction.
  Importantly, the number of $r$-cliques in $G'$ is at least as many as in $G$.

  We keep repeating this process until no vertex in the graph has a degree less than $p-1$, and all vertices with degree $p-1$ share the same set of neighbours.
  The resultant graph is denoted as $H$.
  Since $H$ is $\{F, M_{s+1}\}$-free and $\mathcal{N}(K_r,H)\ge \mathcal{N}(K_r,G)$, $H$ is an extremal graph of $\{F, M_{s+1}\}$, that is, $\mathcal{N}(K_r,H)=\ex(n,K_r,\{F, M_{s+1}\})$.
  Note that $H[X]=K_{p-1}$.
  Let $Y$ denote the set of vertices in $H$ with a degree of $p-1$, and let $t = n - p + 1 - |Y|$.
  It is worth noting that $t \leq 2s + \binom{2s}{p}(q-1) + 1 - p$, since $|Y|\geq |W|\geq n-2s-\binom{2s}{p}(q-1)$.
  Let $Z=V(H)\setminus (X\cup Y)$.
  Then every vertex in $Z$ has a degree of at least $p$.
\end{proof}

\begin{proof}[\textbf{Proof of \cref{thm:thm2.3}}]
As graph $K_{p-1}\vee I_{n-p+1}$ is $\{F, M_{s+1}\}$-free, we have
$$\ex(n, K_r,\{F, M_{s+1}\})\geq\binom{p-1}{r}+\binom{p-1}{r-1}(n-p+1).$$
On the other hand, by Lemma \ref{lmm:stability-2}, we have
$$\ex(n, K_r,\{F, M_{s+1}\})\le \binom{t+p-1}{r}+\binom{p-1}{r-1}(n-t-p+1).$$
\end{proof}

\subsection{Forbidding even paths}





  We construct $G_1$ and $G_2$ as follows, which provide the lower bounds of $\ex(n,K_r,\{P_{2p},M_{s+1}\})$.
  It is easy to check that $G_1$ and $G_2$ are $\{P_{2p},M_{s+1}\}$-free.
\begin{construction}
  Let $s$, $p$ and $n$ be three integers.
  Let $a=\lfloor\frac{s-p+1}{p-1}\rfloor$ and $b=s-(a+1)(p-1)$.
  Define
\begin{itemize}
\item $G_1=(K_{p-1}\vee I_{n-p+1-a(2p-1)})\cup a K_{2p-1}$
\item $G_2=(K_{p-1}\vee I_{n-p+1-a(2p-1)-2b-1})\cup a K_{2p-1}\cup K_{2b+1}$
\end{itemize}
\end{construction}

  Before proceeding with the proof, we provide two useful propositions.

\begin{proposition}\label{proposition:1}
  Let $G$ be a graph, and $C$ be a connected component of $G$.
  Define $G' = (G-V(C)) \cup K_{|C|}$, which represents replacing $C$ in $G$ with a complete graph of the same order.
  If $|C|$ contains a path of length $|C|$, then $\nu(G') = \nu(G)$.
\end{proposition}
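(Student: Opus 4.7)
The plan is to observe that matching number is additive across connected components and to show that, under the stated hypothesis, $C$ and $K_{|C|}$ have exactly the same matching number.

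First I would note that since $C$ is a connected component of $G$ and the graph $K_{|C|}$ introduced in forming $G'$ is disjoint from $G - V(C)$, both $G$ and $G'$ decompose as disjoint unions of components. Matchings in different components cannot share vertices, so
\[
\nu(G) = \nu(G - V(C)) + \nu(C), \qquad \nu(G') = \nu(G - V(C)) + \nu(K_{|C|}).
\]
Thus it suffices to establish $\nu(C) = \nu(K_{|C|})$.

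The upper bound $\nu(C) \le \lfloor |C|/2 \rfloor = \nu(K_{|C|})$ is automatic since $C$ has only $|C|$ vertices. For the matching lower bound on $\nu(C)$, I would use the hypothesis that $C$ contains a Hamilton path (which is how I read ``$C$ contains a path of length $|C|$''): taking every other edge along such a path yields a matching of size exactly $\lfloor |C|/2 \rfloor$ inside $C$, so $\nu(C) \ge \lfloor |C|/2 \rfloor$. Combining the two bounds gives $\nu(C) = \nu(K_{|C|})$, and substituting into the displayed equation above yields $\nu(G') = \nu(G)$.

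There is no real obstacle here; the statement is essentially a bookkeeping observation. The only subtle point is clarifying the terminology: the useful reading of the hypothesis is that $C$ admits a Hamilton path, which is precisely what guarantees $\nu(C)$ attains the trivial upper bound $\lfloor |C|/2 \rfloor$. Once that interpretation is fixed, the proof is a one-line application of the additivity of $\nu$ over components.
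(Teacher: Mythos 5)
Your proof is correct and follows essentially the same route as the paper: interpret the hypothesis as $C$ having a Hamiltonian path, deduce $\nu(C)=\lfloor|C|/2\rfloor=\nu(K_{|C|})$, and conclude via additivity of the matching number over components. No gaps to report.
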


\begin{proof}
  Since $C$ contains a path of length $|C|$, we have $\nu(C) = \lfloor |C|/2 \rfloor$.
  Clearly, $\nu(K_{|C|}) = \lfloor |C|/2 \rfloor = \nu(C)$.
  Therefore, we have $\nu(G') = \nu(K_{|C|}) + \nu(G' - V(K_{|C|})) = \nu(C) + \nu(G-V(C)) = \nu(G)$.
\end{proof}

\begin{proposition}\label{proposition:2}
 Let $G$ be a graph, $C$ be a connected component of $G$, and $w$ be a vertex in $V(G)\setminus V(C)$.
 Define the graph $G'$ as follows:
\begin{align*}
   V(G')=V(G) \quad \text{and} \quad  E(G')=E(G)-\{wv:v\in N_G(w)\}+\{wv:v\in V(C)\}.
\end{align*}
  If $|C|$ is even and $\nu(C) = |C|/2$, then $\nu(G') \leq \nu(G)$.
\end{proposition}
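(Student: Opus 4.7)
The plan is to prove $\nu(G') \leq \nu(G)$ by starting from a maximum matching $M'$ of $G'$ and constructing a matching of $G$ of size at least $|M'|$. The basic bookkeeping tool is that the construction only modifies edges incident to $w$, so $G' - w$ and $G - w$ are the same graph; in particular, every edge of $M'$ not incident to $w$ is already an edge of $G$.

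First I would dispose of the easy case in which $w$ is unsaturated by $M'$: then $M' \subseteq E(G' - w) = E(G - w)$, so $M'$ itself is a matching of $G$ and we are done. In the main case $w$ is matched by $M'$, and since $N_{G'}(w) \subseteq V(C)$, this match is some edge $wv$ with $v \in V(C)$. I would partition $M' \setminus \{wv\}$ into the set $M_C'$ of edges inside $C$ and the set $M_{\text{out}}$ of the remaining edges. Because $C$ is a connected component of $G$, every $G$-edge meeting $V(C)$ lies inside $C$; hence $M_{\text{out}}$ avoids $V(C)$ entirely (and also avoids $w$), and is a matching of $G$ disjoint from $V(C) \cup \{w\}$.

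I would then replace the edges $\{wv\} \cup M_C'$, which touch only $V(C) \cup \{w\}$, by a perfect matching $M_C$ of $C$ in $G$, which exists because $\nu(C) = |C|/2$. The resulting $M_{\text{out}} \cup M_C$ is a matching of $G$, and to compare sizes I would use the parity bound on $M_C'$: since $v$ is matched to $w$ rather than inside $C$, the matching $M_C'$ lives in $C - v$, which has odd order $|C| - 1$, so $|M_C'| \leq \lfloor (|C|-1)/2 \rfloor = |C|/2 - 1$. Hence $|M_C| = |C|/2 \geq |M_C'| + 1$, and $|M_{\text{out}}| + |M_C| \geq |M_{\text{out}}| + |M_C'| + 1 = |M'|$, yielding $\nu(G) \geq \nu(G')$. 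The only step requiring care is this parity estimate; the rest is structural bookkeeping following from $G' - w = G - w$ and the fact that $C$ is a connected component of $G$.
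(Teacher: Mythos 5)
Your proof is correct. It rests on the same two facts as the paper's argument -- that all edges of $G'$ at $w$ go into $V(C)$, and that $C$ has a perfect matching while $V(C)\cup\{w\}$ has odd order -- but the execution differs. The paper's proof is a three-line computation: it observes that $C' := G'[V(C)\cup\{w\}]$ is a connected component of $G'$ with $\nu(C') = |C|/2 = \nu(C)$, and then uses additivity of the matching number over components together with $G'-V(C') \subseteq G-V(C)$ to get $\nu(G') = \nu(C') + \nu(G'-V(C')) \le \nu(C) + \nu(G-V(C)) = \nu(G)$. You instead perform an explicit matching exchange: starting from a maximum matching $M'$ of $G'$, you split off the edges meeting $V(C)\cup\{w\}$, bound them by $|C|/2$ via the parity argument in $C-v$, and swap in a perfect matching of $C$, handling separately the trivial case where $w$ is unsaturated. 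Your route is more elementary and self-contained (it never invokes the component decomposition of $\nu$, and it explicitly checks that $w$'s partner lies in $V(C)$, which the paper leaves implicit), at the cost of a case analysis and some bookkeeping; the paper's route is shorter once one grants the standard decomposition identity. Both are complete and correct.
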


\begin{proof}
  Let $C' = G'[V(C) \cup {w}]$. Then $C'$ is a connected component of $G'$.
  Since $|C|$ is even, $\nu(C') = |C|/2$.
  Therefore, $\nu(G') = \nu(C') + \nu(G' - V(C')) \leq \nu(C) + \nu(G - V(C)) = \nu(G)$.
\end{proof}

\begin{proof}[\textbf{Proof of \cref{2l}.}]
  The lower bounds are established by considering graphs $G_1$ and $G_2$.
  For the upper bound, let $G$ be an extremal graph with the properties of \cref{lmm:stability-2}.
  Additionally, we assume that $G$ has the maximum number of edges among all extremal graphs with $n$ vertices. (The operation mentioned in \cref{lmm:stability-2} does not decrease the number of edges.)
  We will show that $G$ is isomorphic to $G_1$ or $G_2$.

  By \cref{lmm:stability-2}, there is a constant $t$ and a partition $V(G)=X\cup Y\cup Z$ satisfying the conclusions of \cref{lmm:stability-2}.
  Suppose that $X=\{v_1,\ldots,v_{p-1}\}$.
  We first prove that $X$ and $Z$ are not connected.

\begin{claim}\label{cl:disconnected}
   There are no edges between $X$ and $Z$.
\end{claim}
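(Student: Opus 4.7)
The plan is to argue by contradiction: assume that some edge $v_i u$ exists with $v_i \in X$ and $u \in Z$, and then exhibit a copy of $P_{2p}$ inside $G$, contradicting that $G$ is $P_{2p}$-free.

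First I would analyze $u$'s neighborhood. By property (ii) of \cref{lmm:stability-2}, every vertex of $Y$ has neighborhood exactly $X$, so no vertex of $Y$ is adjacent to $u$; hence $N(u) \subseteq X \cup Z$. Since $u \in Z$ gives $d(u) \geq p$ by property (iii), while $|X| = p-1$, there must exist some $u' \in N(u) \cap Z$ with $u' \neq u$. This is the one ingredient that genuinely uses the hypothesis $u \in Z$.

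Next I would build the path. Since $|Y| = n - t - p + 1$ and $n$ is sufficiently large while $t \le \binom{2s}{p}(q-1) + 2s + 1 - p$ is bounded, I can pick $p-1$ distinct vertices $y_1, \ldots, y_{p-1} \in Y$. Write $X = \{v_{i_1}, v_{i_2}, \ldots, v_{i_{p-1}}\}$ with $v_{i_1} := v_i$, and consider the sequence
\[
u',\ u,\ v_{i_1},\ y_1,\ v_{i_2},\ y_2,\ \ldots,\ v_{i_{p-1}},\ y_{p-1}.
\]
Every consecutive pair is an edge: $u'u$ by choice of $u'$; $u v_{i_1}$ by assumption; and every $v_{i_j} y_j$ together with every $y_j v_{i_{j+1}}$ because each $y_j \in Y$ is adjacent to every vertex of $X$. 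Since $X$, $Y$, $Z$ partition $V(G)$, the three blocks $\{u', u\} \subseteq Z$, $X$, and $\{y_1, \ldots, y_{p-1}\} \subseteq Y$ are pairwise disjoint, so the sequence consists of $2 + (p-1) + (p-1) = 2p$ distinct vertices, giving a $P_{2p}$ in $G$, a contradiction.

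There is no substantial obstacle: the only things to verify are that the degree condition on $u$ together with $|X| = p-1$ forces a $Z$-neighbor, and that $|Y|$ is large enough to supply the $p-1$ intermediate vertices, both of which are immediate from \cref{lmm:stability-2} and from $n$ being sufficiently large. The real content is the structural observation that the complete bipartite skeleton between $X$ and $Y$ automatically threads an alternating path of $2(p-1)$ vertices, and an extra $Z\text{-}Z\text{-}X$ hook at the front pushes the length over the $P_{2p}$ threshold.
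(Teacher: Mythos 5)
Your proof is correct and follows essentially the same route as the paper: use $d(u)\geq p$, $|X|=p-1$, and the fact that $Y$-vertices attach only to $X$ to find a neighbour $u'\in Z$ of $u$, then thread the path $u'u v_1 w_1 \cdots v_{p-1}w_{p-1}$ through the complete bipartite structure between $X$ and $Y$ to produce a $P_{2p}$, contradicting $P_{2p}$-freeness. Your extra remarks (that $N(u)\subseteq X\cup Z$ and that $|Y|$ is large enough to supply $p-1$ vertices) are just the details the paper leaves implicit.
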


\begin{pf}
  Without loss of generality, assume that there is an edge  $uv_1\in E(G)$ for some $u\in Z$.
  It follows from \cref{lmm:stability-2} (iii) that $d(u)\geq p$.
  By \cref{lmm:stability-2} (i), $|X|=p-1$, thus $u$ has a neighbour in $Z$, denoted as $u'$.
  Moreover, all vertices in $Y$ have the neighbourhood $X$ by \cref{lmm:stability-2} (ii).
  Hence, there exist $w_1,\ldots,w_{p-1}\in Y$ such that $P=u'uv_1w_1\cdots v_{p-1}w_{p-1}$ is a path of length $2p$.
  However, this contradicts the assumption that $G$ is $P_{2p}$-free.
\end{pf}


  Combining \cref{cl:disconnected} and \cref{lmm:stability-2} (ii), there are no edges between $Z$ and $X\cup Y$.
  We then proceed to characterize the structure of the subgraphs induced by $Z$.

\begin{claim}\label{cl:claim3.7}
  For any component $C$ in $G[Z]$, $C$ forms an odd clique with $p+1\leq |C|\leq 2p-1$.
\end{claim}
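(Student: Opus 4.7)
The plan is to establish each of the four required conditions in turn: $|C|\ge p+1$, $|C|\le 2p-1$, $C$ is a clique, and $|C|$ is odd. The two degree bounds are essentially immediate. By \cref{cl:disconnected} and \cref{lmm:stability-2} (ii), no vertex of $Z$ has a neighbour in $X\cup Y$, so for any $v\in V(C)$ every neighbour of $v$ lies inside $C$; combined with \cref{lmm:stability-2} (iii) this yields $\delta(C)\ge p$, hence $|C|\ge p+1$. For the upper bound, apply \cref{Dirac} to the connected graph $C$: a longest path $P$ has $|P|\ge\min\{|C|,d(u)+d(v)+1\}\ge\min\{|C|,2p+1\}$, so $|C|\ge 2p$ would produce a $P_{2p}$ in $G$, contradicting $\{P_{2p},M_{s+1}\}$-freeness.

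To show that $C$ is a clique, I would exploit the maximality of $G$ (both as a $K_r$-count maximiser and, as a tie-breaker, as an edge maximiser). Since $|C|\le 2p-1<2p+1\le d(u)+d(v)+1$, \cref{Dirac} also gives that $C$ admits a Hamilton path. By \cref{proposition:1}, replacing $C$ by $K_{|C|}$ produces a graph $G'$ with $\nu(G')=\nu(G)\le s$, hence $M_{s+1}$-free; and $K_{|C|}$ is $P_{2p}$-free because $|C|\le 2p-1$, so $G'$ remains $\{P_{2p},M_{s+1}\}$-free. If $C$ were not already complete, then either $\mathcal{N}(K_r,G')>\mathcal{N}(K_r,G)$ (when $|C|\ge r$), contradicting $K_r$-maximality, or $\mathcal{N}(K_r,\cdot)$ is unchanged but $e(G')>e(G)$, contradicting the edge tie-breaker.

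For the parity, suppose towards contradiction that $|C|$ is even, so $|C|\le 2p-2$. Pick any $y\in Y$, which is possible since $|Y|=n-t-p+1$ is large, and apply \cref{proposition:2} with $w=y$ to redirect the neighbourhood of $y$ from $X$ to $V(C)$, obtaining $G'$. Because $C=K_{|C|}$ (by the previous step) has $\nu(C)=|C|/2$, \cref{proposition:2} gives $\nu(G')\le\nu(G)\le s$, so $G'$ is still $M_{s+1}$-free; and the new component on $V(C)\cup\{y\}$ is $K_{|C|+1}$ with $|C|+1\le 2p-1$, hence $P_{2p}$-free. The net change in the $K_r$-count at $y$ is $\binom{|C|}{r-1}-\binom{p-1}{r-1}$, which is nonnegative because $|C|>p-1$ and strictly positive unless both binomials vanish; in that remaining case, the edge count at $y$ still strictly increases (from $p-1$ to $|C|\ge p+1$), again contradicting the extremality of $G$.

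The main obstacle is the bookkeeping in the last step: one has to verify that the modification strictly improves $G$ even when $r$ is so large that neither $\binom{|C|}{r-1}$ nor $\binom{p-1}{r-1}$ contributes to the $K_r$-count. This is precisely why the proof fixes $G$ to additionally maximise the number of edges among $K_r$-extremal graphs, a choice that quietly powers both the clique step and the parity step.
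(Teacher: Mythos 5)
Your proof is correct and follows essentially the same route as the paper: minimum degree for the lower bound, \cref{Dirac} for the upper bound, \cref{proposition:1} with the edge-maximality tie-breaker for cliqueness, and \cref{proposition:2} plus redirecting a vertex of $Y$ for parity. In fact your explicit fallback to the edge count when $\binom{|C|}{r-1}=\binom{p-1}{r-1}=0$ (i.e.\ $r-1>|C|$) is slightly more careful than the paper, which asserts the strict binomial inequality without addressing that degenerate case.
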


\begin{pf}
  Consider a component $C$ of $G[Z]$.
  Since $Z$ and $X \cup Y$ are not connected, we have $\delta(C) \geq \delta(G) \geq p$, implying $|C| \geq p+1$.
  Let $P$ be the longest path in $C$.
  By utilizing \cref{Dirac} and noting that $C$ is $P_{2p}$-free, we deduce that $|C|\ge|P|\geq \min \{|C|, 2p+1\}$.
  This leads to $p+1\leq|P|=|C|\leq 2p-1$.

  If $|C|$ is even, then $C$ contains a path of length $|C|$ and consequently contains a $|C|/2$-matching.
  According to \cref{proposition:1}, replacing $C$ with a complete graph of the same order does not increase the matching number.
 Since $|C| \leq 2p-2$, it also does not create $P_{2p}$.
 Considering that $G$ has the maximum number of edges among all extremal graphs, $C$ must be a clique.
 Now we transform the neighbourhood of a vertex $w$ in $Y$ to $C$, resulting in the graph $G'$.
 By \cref{proposition:2}, $\nu(G') \leq \nu(G)$.
 Note that $|C| \leq 2p-2$, so $G'$ does not contain $P_{2p}$.
 During this process, we remove $\binom{p-1}{r-1}$ $r$-cliques and add $\binom{|C|}{r-1}$ $r$-cliques.
 Since $|C| \geq p+1$, we have $\binom{|C|}{r-1} > \binom{p-1}{r-1}$, which contradicts the maximality of $G$.
 Therefore, $|C|$ is odd. Again, by utilizing \cref{proposition:1} and considering the maximality of the edge number, we conclude that $C$ is a clique.
\end{pf}

\begin{claim}\label{cl:claim3.8}
  There is at most one component in $G[Z]$ with an order smaller than $2p-1$.
\end{claim}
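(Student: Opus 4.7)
My plan is to argue by contradiction: suppose $G[Z]$ has two components $C_1$ and $C_2$ of order at most $2p-3$. By \cref{cl:claim3.7} each $C_i$ is an odd clique $K_{c_i}$ with $p+1 \le c_i \le 2p-3$; I assume without loss of generality $c_1 \ge c_2$. The goal will be to construct an $\{P_{2p}, M_{s+1}\}$-free graph $G'$ on the same vertex set with strictly more copies of $K_r$, contradicting the extremality of $G$.

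The operation I would perform is a ``rebalancing transfer.'' Set $k = 2p-1-c_1$; since both $c_1$ and $2p-1$ are odd, $k$ is even and $k \ge 2$. Noting $c_2 - k = c_1 + c_2 - 2p + 1 \ge 3$, pick any $U \subseteq V(C_2)$ with $|U|=k$, delete all edges between $U$ and $V(C_2)\setminus U$, and add all edges between $U$ and $V(C_1)$. The effect is to replace the two components $C_1, C_2$ by a $K_{2p-1}$ on $V(C_1) \cup U$ and a $K_{c_1+c_2-2p+1}$ on $V(C_2)\setminus U$, while leaving the rest of $G$ untouched.

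Two things then need verifying. For $\{P_{2p}, M_{s+1}\}$-freeness of $G'$: every component still has order at most $2p-1$, so $G'$ is $P_{2p}$-free; and a parity calculation shows that the matching contribution of the modified part is $(p-1) + (c_1+c_2-2p)/2 = (c_1+c_2)/2 - 1$, identical to the original $(c_1-1)/2 + (c_2-1)/2$, so $\nu(G')=\nu(G) \le s$. For the strict gain in $K_r$-count I would apply \cref{2021C} with $x=2p-1$, $y=c_1+c_2-2p+1$, $w=c_1$, $z=c_2$: all hypotheses are immediate and the strict inequalities $x>w$, $x>z$ follow from $c_i \le 2p-3 < 2p-1$, yielding $\binom{2p-1}{r}+\binom{c_1+c_2-2p+1}{r} > \binom{c_1}{r}+\binom{c_2}{r}$ whenever $2p-1 \ge r$. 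The only wrinkle is the degenerate range $r > 2p-1$, but there no admissible clique contains a $K_r$ and the problem is trivial. The main thing to be careful of is the parity of $k$; an odd transfer would bump the matching number by $1$ and break the construction.
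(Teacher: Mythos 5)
Your proof is correct and follows essentially the same approach as the paper: both arguments move an even number of vertices between the two small odd clique components (the paper replaces $K_x\cup K_y$ by $K_{x+2}\cup K_{y-2}$, you transfer $2p-1-c_1$ vertices at once to form $K_{2p-1}$), check that this preserves $P_{2p}$-freeness and the matching number, and then invoke \cref{2021C} to contradict the maximality of the $K_r$-count. The parity observation and your handling of the degenerate range $r>2p-1$ are fine and do not change the substance of the argument.
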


\begin{pf}
  By \cref{cl:claim3.7}, we know that the connected components of $G[Z]$ are all cliques of odd order.
  We proceed by contradiction.
  Suppose $K_x$ and $K_y$ are two components (thus are two cliques) of $G[Z]$ with sizes $x$ and $y$, respectively, where $p+1 \leq y \leq x \leq 2p-3$.
  We replace $K_x \cup K_y$ with $K_{x+2} \cup K_{y-2}$. It is easy to verify that this operation does not increase $\nu(G)$ and does not create $P_{2p}$. According to \cref{2021C}, we have
\begin{align*}
  \binom{x+2}{r}+\binom{y-2}{r}>\binom{x}{r}+\binom{y}{r}.
\end{align*}
 This means the resulting graph has more $r$-cliques than $G$, contradicting the maximality of $G$.
\end{pf}

  Consider the $2p-1$ vertices in an independent $(2p-1)$-clique and the $2p-1$ vertices in $Y$.
  The former can contribute a total of $\binom{2p-1}{r}$ copies of $K_r$, while the latter can contribute $(2p-1)\binom{p-1}{r-1}$ copies of $K_r$.
  By \cref{lmm:Vandermonde}, we have
\begin{align*}
  \binom{2p-1}{r}
  &=\sum_{i=0}^r\binom{p-1}{i}\binom{p}{r-i}\\
  &>\binom{p-1}{1}\binom{p}{r-1}+\binom{p-1}{r-1}\binom{p}{1}\\
  &>(2p-1)\binom{p-1}{r-1}.
\end{align*}
  Therefore, it is desirable to maximize the number of independent $(2p-1)$-cliques in $G[Z]$, while ensuring that the matching number does not exceed $s$.
  Consequently, there exist $a:=\lfloor\frac{s-p+1}{p-1}\rfloor$ independent $(2p-1)$-cliques within $Z$.

  If there is no component of order smaller than $2p-1$ in $G[Z]$, then $G[Z]=a K_{2p-1}$.
  Thus
  $$G=(K_{p-1}\vee I_{n-p+1-a(2p-1)})\cup a K_{2p-1}=G_1.$$

  If there is a component, denoted as $C'$, in $G[Z]$ of order smaller than $2p-1$,
  then by \cref{cl:claim3.7} and \cref{cl:claim3.8}, $C'$ is a clique with $p+1\le|C'|\leq 2p-3$.
  We claim that $\nu(C')=s-(a+1)(p-1)$.
  Since, by \cref{lmm:stability-2} (ii), $G[X,Y]=K_{p-1,n-t-p+1}$ and $n$ is sufficiently large, we have $\nu(G[Z])\le s-p+1$.
  If $\nu(G[Z])< s-p+1$, we alter the neighbourhood of a vertex $w\in Y$ to $C'$.
  Notice that this operation does not create a copy of $P_{2p}$.
  We remove $\binom{p-1}{r-1}$ $r$-cliques and add $\binom{|C'|}{r-1}$ $r$-cliques.
  Note that $|C'|>p-1$, this contradicts the maximality of $G$.
  Thus $\nu(C')=s-(p-1)-a(p-1)=s-(a+1)(p-1)$.

  Since $C'$ is an odd clique and $\nu(C')=s-(a+1)(p-1)$, $C'=K_{2b+1}$.
  Therefore, $$G=(K_{p-1}\vee I_{n-p+1-a(2p-1)-2b+1})\cup a K_{2p-1}\cup K_{2b+1}=G_2.$$
\end{proof}

\subsection{Forbidding odd paths}

  We construct graphs $G_3$, $G_4$, $G_5$ and $G_6$ as follows, which provide the lower bounds of $\ex(n,K_r,\{P_{2p+1},M_{s+1}\})$.

\begin{construction}
  Let $q=\lfloor\frac{s-p+1}{p-1}\rfloor$ and $t=s-(q+1)(p-1)$.
 Define
\begin{eqnarray*}
	G_3=\left\{
	\begin{array}{ll}
		(K_{p-1}\vee I_{n-p-q(2p-1)})\cup q K_{2p-1} &\mbox{if $t=0$}\\
		(K_{p-1}\vee I_{n-p-q(2p-1)-2t})\cup q K_{2p-1}\cup K_{2t+1} &\mbox{if $t\not=0$}.
	\end{array}
	\right.
\end{eqnarray*}

  Let $c=\lfloor\frac{s-p+1}{p}\rfloor$ and $d=s-p+1-cp$. Define
\begin{eqnarray*}
	G_4=\left\{
\begin{array}{ll}
	(K_{p-1}\vee I_{n-p+1-2cp})\cup c K_{2p} &\mbox{if $d=0$}\\
	(K_{p-1}\vee I_{n-p-2cp-2d})\cup c K_{2p}\cup K_{2d+1} &\mbox{if $d\not=0$}.
\end{array}
\right.
\end{eqnarray*}

\begin{itemize}
\item
For $1\leq d\leq p-2$, we define
$$G_5=(K_{p-1}\vee I_{n-p+1-2p(c+d+1-p)-(p-d)(2p-1)})\cup (c+d+1-p) K_{2p}\cup (p-d)K_{2p-1}.$$
\item
For $1\leq d\leq p-1$, we define $G_6=(K_{p-1}\vee (K_2\cup I_{n-p-1-2cp}))\cup c K_{2p}.$
\end{itemize}
\end{construction}
\noindent
It is easy to check that $G_3$, $G_4$, $G_5$ and $G_6$ are $\{P_{2p+1},M_{s+1}\}$-free.
\begin{proof}[\textbf{Proof of Theorem \ref{2l+1}.}]
  The lower bounds are established by graphs $G_3$, $G_4$, $G_5$,  and $G_6$.
  For the upper bounds, consider $G$ as an extremal graph guaranteed by \cref{lmm:stability-2}.
  Additionally, we assume that $G$ has the maximum number of edges among all extremal graphs with $n$ vertices.
  We will show that $G$ is isomorphic to  $G_3$, $G_4$, $G_5$, or $G_6$.

By \cref{lmm:stability-2}, there is a constant $t$ and a partition $V(G)=X\cup Y\cup Z$ satisfies the conclusions of \cref{lmm:stability-2}.
Suppose that $X=\{v_1,\ldots,v_{p-1}\}$.

  We are considering two cases regarding the existence or non-existence of an edge between $X$ and $Z$, respectively.

\begin{mycase}{Case 1.}
   $e(G[X,Z])=0$.
\end{mycase}

  Consider a component $C$ of $G[Z]$.
  Since $\delta(C)\geq p$, we have $|C|\geq p+1$.
  Let $P$ be a longest path of $C$.
  Since $C$ is $P_{2p+1}$-free, according to \cref{Dirac}, we have $|C|\ge |P|\geq \min \{|C|, 2p+1\}$.
  Therefore, $p+1\leq|P|=|C|\leq 2p$.

  We now claim that every component of $G[Z]$ is a clique.
  Since $C$ contains a copy of $P_{|C|}$ and thus contains a $\lfloor|C| / 2\rfloor$-matching.
  According to \cref{proposition:1}, replacing $C$ with a complete graph of the same order does not increase the matching number.
  Since $|C| \leq 2 p$, it also does not create $P_{2 p+1}$.
  Then $C$ must be a clique as $G$ has the maximum number of edges among all extremal graphs.

  The following four claims give a more precise characterization of the components of $G[Z]$.

\begin{claim}\label{cl:even}
  Each even component (if exists) in $G[Z]$ is a clique with order $2p$.
\end{claim}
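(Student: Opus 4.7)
The plan is to argue by contradiction using the swap operation recorded in \cref{proposition:2}. The paragraph preceding the claim already establishes that every component of $G[Z]$ is a clique on between $p+1$ and $2p$ vertices, so the only remaining task is to rule out the possibility that some even component $C$ satisfies $|C|\le 2p-2$.

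Suppose such a $C$ exists. Because $C$ is an even clique, $\nu(C) = |C|/2$, so the hypothesis of \cref{proposition:2} is satisfied. Pick any $w \in Y$ (available since $|Y|=n-t-p+1$ is large by \cref{lmm:stability-2}) and let $G'$ be obtained from $G$ by deleting the edges from $w$ to $X = N_G(w)$ and adding edges from $w$ to every vertex of $C$. By \cref{proposition:2}, $\nu(G') \le \nu(G) \le s$, so $G'$ remains $M_{s+1}$-free. The only component of $G'$ that differs from $G$ is the clique on $V(C)\cup\{w\}$, whose order $|C|+1 \le 2p-1$ is too small to contain $P_{2p+1}$; every other component of $G'$ is a subgraph of a component of $G$, so $G'$ is also $P_{2p+1}$-free.

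To finish I compare the $K_r$-counts. Since $G'-w = G-w$, only $r$-cliques containing $w$ are affected. In $G$, the neighbourhood of $w$ is $X \cong K_{p-1}$, contributing $\binom{p-1}{r-1}$ copies of $K_r$ through $w$; in $G'$, the neighbourhood of $w$ is $V(C)$, a clique, contributing $\binom{|C|}{r-1}$ copies through $w$. Since $|C| \ge p+1 > p-1$, we conclude $\mathcal{N}(K_r, G') > \mathcal{N}(K_r, G)$, contradicting the maximality of $\mathcal{N}(K_r,G)$. Hence every even component of $G[Z]$ has order exactly $2p$.

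I do not expect any serious obstacle: the clique structure and the size bound $|C|\le 2p$ are handed to us by the discussion before the claim, and the two non-trivial verifications (matching number does not grow; no $P_{2p+1}$ is created) are immediate consequences of \cref{proposition:2} and of $|C|+1\le 2p-1$ respectively. The only place where care is needed is to confirm that removing $w$'s edges to $X$ does not accidentally remove other useful structure — but this is transparent since $w\in Y$ has neighbourhood precisely $X$ in $G$, so $G'-w = G-w$.
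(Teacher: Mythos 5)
Your proposal is correct and follows essentially the same argument as the paper: redirect the neighbourhood of a vertex $w\in Y$ to the small even component $C$, invoke \cref{proposition:2} to keep the matching number at most $s$, note the new component has at most $2p-1$ vertices so no $P_{2p+1}$ arises, and compare $\binom{|C|}{r-1}$ with $\binom{p-1}{r-1}$ to contradict the maximality of $\mathcal{N}(K_r,G)$. Your added checks (that $G'-w=G-w$ and the explicit $P_{2p+1}$-freeness verification) are just slightly more detailed versions of the paper's steps.
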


\begin{pf}
  Suppose there exists an even component $C$ with order at most $2p-2$,
  we can modify $G$ by changing the neighbourhood of a vertex $w\in Y$ to $C$.
  According to \cref{proposition:2}, this operation does not increase the matching number.
  Moreover, since $|C|\le 2p-2$, it also does not create a copy of $P_{2p+1}$.
  We remove $\binom{p-1}{r-1}$ $r$-cliques and add $\binom{|C|}{r-1}$ $r$-cliques.
  Since $|C|> p-1$, this contradicts the maximality of $G$.
\end{pf}

\begin{claim}\label{claim5}
  In $G[Z]$, there exists at most one odd component with an order smaller than $2p-1$.
  Moreover, if there are multiple odd components and at least one even component, then all odd components have an order of $2p-1$.
\end{claim}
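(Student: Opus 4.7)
The plan is to mirror the local-swap strategy used in \cref{cl:claim3.8} and \cref{cl:even} of this subsection, producing, from a supposed counterexample, an alternative $\{P_{2p+1}, M_{s+1}\}$-free graph with strictly more copies of $K_r$, and thereby contradicting the maximality of $\mathcal{N}(K_r, G)$.

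For part~(a), if $G[Z]$ contained two odd components $K_x, K_y$ with $p+1 \le y \le x \le 2p-3$, I would replace the pair by $K_{x+2} \cup K_{y-2}$. The matching number is preserved, since both the old and the new pairs consist of two odd cliques of total size $x+y$, each contributing matching $(x+y-2)/2$; and $x+2 \le 2p-1$ keeps the graph $P_{2p+1}$-free. Then \cref{2021C} applied to the strictly more unbalanced pair $(x+2, y-2)$ yields $\binom{x+2}{r}+\binom{y-2}{r} > \binom{x}{r}+\binom{y}{r}$, contradicting maximality. This is essentially the swap used in the proof of \cref{cl:claim3.8}.

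For part~(b), suppose toward contradiction that $G[Z]$ has $\ge 2$ odd components, $\ge 1$ even component, and some odd $K_y$ with $y < 2p-1$. By (a), this $K_y$ is unique; by \cref{cl:even} the even component is some $K_{2p}$; and at least one further odd component is some $K_{2p-1}$. Set $A := \binom{2p-1}{r-1}+\binom{p-1}{r-1}$ and $B(k) := \binom{k+1}{r-1}+\binom{k}{r-1}$. I would consider two candidate swaps on the triple $K_{2p} \cup K_{2p-1} \cup K_y$. \emph{Swap~A} moves one vertex of $K_{2p}$ and one vertex of $Y$ onto $K_y$, turning $K_{2p}\cup K_y$ into $K_{2p-1}\cup K_{y+2}$ while leaving the auxiliary $K_{2p-1}$ alone. \emph{Swap~B} moves one vertex of $K_y$ into $K_{2p-1}$ (promoting it to $K_{2p}$) and a second vertex of $K_y$ into $Y$, turning $K_{2p-1}\cup K_y$ into $K_{2p}\cup K_{y-2}$ while leaving the original $K_{2p}$ alone. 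Both swaps preserve the matching number (one component gains $1$ in matching, another loses $1$, and the bipartite matching on $X\cup Y$ stays $p-1$ since $|Y|$ is large), keep the graph $P_{2p+1}$-free (every new component has at most $2p$ vertices, and $y+2 \le 2p-1$), and produce net changes in $\mathcal{N}(K_r, \cdot)$ equal to $B(y)-A$ and $A-B(y-2)$, respectively.

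Since $B(k+1)-B(k) = \binom{k+1}{r-2}+\binom{k}{r-2} > 0$ in the relevant range, $B$ is strictly increasing, so $B(y-2) < B(y)$. Hence if Swap~A fails (i.e., $B(y) \le A$) then automatically $B(y-2) < A$ and Swap~B gives a strict gain; either way, at least one swap strictly increases $\mathcal{N}(K_r, \cdot)$, contradicting the maximality of $G$. The main obstacle I foresee is recognizing that neither swap alone suffices---Swap~A helps only when $y$ is close to $2p-3$, Swap~B only when $y$ is close to $p+1$---so the argument depends on pairing these complementary swaps so that their sufficient conditions $B(y)>A$ and $B(y-2)<A$ jointly cover the whole range $y \in \{p+1, p+3, \ldots, 2p-3\}$.
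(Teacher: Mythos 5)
Your proposal is correct and follows essentially the same route as the paper: part (a) is the identical $K_x\cup K_y\to K_{x+2}\cup K_{y-2}$ swap via \cref{2021C}, and your Swaps A and B are exactly the paper's two operations (delete a $Y$-vertex and replace $K_{2p}\cup K_{2t-1}$ by $K_{2p-1}\cup K_{2t+1}$; add a $Y$-vertex and replace $K_{2p-1}\cup K_{2t-1}$ by $K_{2p}\cup K_{2t-3}$). Your closing step, that strict monotonicity of $B$ forces one of the two swaps to be strictly profitable, is just a reformulation of the paper's step of summing its inequalities (1) and (2) and contradicting the strict convexity supplied by \cref{2021C}.
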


\begin{pf}
   Assume that $K_x$ and $K_y$ are two odd components of $G[Z]$ with $p+1\leq y\leq x\leq 2p-3$.
   We can replace $K_x\cup K_y$ with $K_{x+2}\cup K_{y-2}$ without increasing $\nu(G)$ or creating a copy of $P_{2p+1}$.
   However, by \cref{2021C}, we have $\binom{x+2}{r}+\binom{y-2}{r}>\binom{x}{r}+\binom{y}{r}$, which implies that the resulting graph has more $r$-cliques than $G$, contradicting the maximality of $G$.

   Suppose there are multiple odd components and at least one even component. If the smallest odd component has an order of $2t-1$, where $(p+2)/2\leq t\leq p-1$,
  we claim that
\begin{equation}
  \binom{2p}{r}+\binom{2t-1}{r}+\binom{p-1}{r-1}\geq\binom{2p-1}{r}+\binom{2t+1}{r}, \label{1}
\end{equation}
and
\begin{equation}
\binom{2p-1}{r}+\binom{2t-1}{r}\geq \binom{2p}{r}+\binom{2t-3}{r}+\binom{p-1}{r-1}. \label{2}
\end{equation}
   Otherwise,
\begin{itemize}
    \item if $\binom{2p}{r}+\binom{2t-1}{r}+\binom{p-1}{r-1}<\binom{2p-1}{r}+\binom{2t+1}{r}$, then we remove a vertex from $Y$ and replace $K_{2p} \cup K_{2t-1}$ with $K_{2p-1}\cup K_{2t+1}$;
    \item if $\binom{2p-1}{r}+\binom{2t-1}{r}<\binom{2p}{r}+\binom{2t-3}{r}+\binom{p-1}{r-1}$, then we add a vertex to $Y$ and replace $K_{2p-1} \cup K_{2t-1}$ with $K_{2p}\cup K_{2t-3}$.
\end{itemize}
   Neither operation increases $\nu(G)$ or creates a copy of $P_{2p+1}$, but they do increase the number of $r$-cliques, which contradicts the maximality of $G$.

   Combining (\ref{1}) and (\ref{2}), we obtain $2\binom{2t-1}{r}\geq \binom{2t-3}{r}+\binom{2t+1}{r}$, which contradicts $\binom{2t-1}{r}+\binom{2t-1}{r}< \binom{2t-3}{r}+\binom{2t+1}{r}$ from  \cref{2021C}.
\end{pf}


\begin{claim}\label{cl:2p-1}
    There are at most $p-1$ independent $(2p-1)$-cliques in $G[Z]$.
\end{claim}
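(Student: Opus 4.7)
The plan is to argue by contradiction via a local swap. Assume $G[Z]$ contains at least $p$ components each isomorphic to $K_{2p-1}$, select $p$ of them (spanning $p(2p-1)$ vertices), and replace these with the disjoint union of $p-1$ copies of $K_{2p}$ (spanning $2p(p-1)$ vertices) together with $p$ new vertices placed into $Y$, each with neighbourhood set equal to $X$. Call the resulting graph $G^{*}$; the vertex count is preserved, since the altered region loses exactly $p$ vertices which migrate to $Y$.

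Next I would verify that $G^{*}$ is $\{P_{2p+1}, M_{s+1}\}$-free and then compare $K_r$-counts. Each component of $G^{*}[Z]$ is a clique of order at most $2p$, and by the Case~1 hypothesis there are no edges between $X\cup Y$ and $Z$, so any $P_{2p+1}$ in $G^{*}$ would have to lie in $G^{*}[X\cup Y]$, whose longest path alternates between $X$ and $Y$ and uses at most $2|X|+1 = 2p-1$ vertices. For the matching bound, $\nu((p-1)K_{2p}) = p(p-1) = \nu(p K_{2p-1})$, so the modified components contribute the same matching number, and the new $Y$-vertices do not enlarge $\nu$, since the $X$-$Y$ bipartite matching is capped by $|X|=p-1$. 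Hence $\nu(G^{*}) \leq \nu(G) \leq s$. The net change in $K_r$-count comes only from the swapped region and the new $Y$-vertices:
\begin{align*}
\Delta = (p-1)\binom{2p}{r} + p\binom{p-1}{r-1} - p\binom{2p-1}{r}.
\end{align*}
Applying Pascal's identity $\binom{2p}{r} = \binom{2p-1}{r} + \binom{2p-1}{r-1}$, this simplifies to
\begin{align*}
\Delta = (p-1)\binom{2p-1}{r-1} - \binom{2p-1}{r} + p\binom{p-1}{r-1}.
\end{align*}
The ratio identity $\binom{2p-1}{r}/\binom{2p-1}{r-1} = (2p-r)/r \leq p-1$, valid for $r \geq 2$, shows $(p-1)\binom{2p-1}{r-1} \geq \binom{2p-1}{r}$, while the third term is non-negative. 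At least one inequality is strict across the range of interest: for $r=2$ the third term equals $p(p-1) > 0$, and for $r\geq 3$ the ratio bound is strict whenever $\binom{2p-1}{r-1} > 0$. Thus $\Delta > 0$, contradicting the maximality of $\mathcal{N}(K_r, G)$ among extremal graphs.

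The main obstacle I expect is confirming that the swap preserves both forbidden-subgraph conditions; the Case~1 disconnection between $X$ and $Z$ is essential here and makes the $P_{2p+1}$-freeness check immediate, while the matching bound is driven by the equality $\nu(pK_{2p-1}) = \nu((p-1)K_{2p})$ together with the $|X|=p-1$ bottleneck on $X$-$Y$ matchings. The counting step is then a clean application of Pascal's identity and a binomial ratio estimate, delivering the required strict gain in $\mathcal{N}(K_r,\cdot)$.
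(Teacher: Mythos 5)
Your proposal is correct and is essentially the paper's own argument: the paper performs the same local swap, replacing $pK_{2p-1}$ by $(p-1)K_{2p}$ (with the $p$ leftover vertices left isolated rather than joined to $X$) and deriving the contradiction from $(p-1)\binom{2p}{r}>p\binom{2p-1}{r}$ for $r\ge 3$. Your variant of attaching the freed vertices to $X$ only adds the nonnegative term $p\binom{p-1}{r-1}$ and the (correctly verified) checks that this keeps the graph $\{P_{2p+1},M_{s+1}\}$-free, so the two proofs coincide in substance.
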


\begin{pf}
  Suppose to the contrary that there exist $p$ disjoint copies of  $K_{2p-1}$ in $G[Z]$.
   We replace the copy of $p K_{2p-1}$ in $G[Z]$ with $(p-1)K_{2p}$ and add $p$ isolated vertices.
  Let $G'$ be the resulting graph. Then $|G'|=|G|$.
  During the transition from $G$ to $G'$, we remove $p\binom{2p-1}{r}$ $r$-cliques and add $(p-1)\binom{2p}{r}$ $r$-cliques.
  Note that for $r\ge 3$, we have
\begin{align*}
  \frac{(p-1)\binom{2p}{r}}{p\binom{2p-1}{r}}  = \frac{\frac{(p-1)\cdot(2p)!}{r!(2p-r)!}}{\frac{p\cdot(2p-1)!}{r!(2p-1-r)!}}=\frac{2p-2}{2p-r}>1.
\end{align*}
  This contradicts the maximality of $G$.
\end{pf}

 Next claim shows that the matching number of $G[Z]$ is exactly $s-p+1$.

\begin{claim}\label{cl:matching number of Z}
    $\nu(G[Z])=s-p+1$.
\end{claim}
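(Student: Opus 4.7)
The plan is to sandwich $\nu(G[Z])$ between $s-p+1$ and $s-p+1$. For the upper bound, the key observation is that Case~1 ($e(G[X,Z])=0$) combined with \cref{lmm:stability-2}(ii) (which forces $N(y)=X$ for every $y\in Y$) makes $Z$ entirely disconnected from $X\cup Y$ in $G$. Hence $\nu(G)=\nu(G[X\cup Y])+\nu(G[Z])$; since $G[X\cup Y]=K_{p-1}\vee I_{|Y|}$ with $|Y|$ large, a direct count gives $\nu(G[X\cup Y])=p-1$, and the $M_{s+1}$-freeness of $G$ yields $\nu(G)\le s$, so $\nu(G[Z])\le s-p+1$.

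For the lower bound I would argue by contradiction, assuming $\nu(G[Z])\le s-p$ and thus $\nu(G)\le s-1$. The idea is to add a single edge $y_1y_2$ between two (necessarily non-adjacent) vertices of $Y$ to form $G'=G+y_1y_2$ and extract a contradiction with the extremality of $G$. The matching count is safe since $\nu(G')\le \nu(G)+1\le s$. For $P_{2p+1}$-freeness: because $Z$ remains disconnected from the $X\cup Y$-component in $G'$, any $P_{2p+1}$ in $G'$ would have to use the new edge and live inside $(K_{p-1}\vee I_{|Y|})+y_1y_2$; a short case analysis on longest paths in this auxiliary graph shows the maximum has at most $2p$ vertices (e.g.\ a path of the form $y_*v_1y_1y_2v_2y_{**}v_3\cdots v_{p-1}y_{***}$ achieves the maximum), so $G'$ is $\{P_{2p+1},M_{s+1}\}$-free.

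To close the argument I would count new copies of $K_r$ in $G'$: the edge $y_1y_2$ creates exactly $\binom{p-1}{r-2}$ new $K_r$'s, corresponding to the $(r-2)$-subsets of $N_G(y_1)\cap N_G(y_2)=X$. If $p\ge r-1$ this increment is positive, contradicting the $K_r$-extremality of $G$ outright. If $p\le r-2$ the increment is zero, so $G'$ is still extremal in $K_r$-count; but $e(G')=e(G)+1$ contradicts the standing assumption that $G$ has the maximum number of edges among all extremal graphs. Either way a contradiction, hence $\nu(G[Z])\ge s-p+1$ and equality holds.

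The main obstacle I expect is the $P_{2p+1}$-free verification for $G'$, which must rule out that the new edge, together with the $X$-clique and the complete bipartite structure between $X$ and $Y$, produces a path longer than $2p$. I also want to emphasize that the $p\le r-2$ branch of the final step is precisely where the ``maximum number of edges among extremal graphs'' hypothesis on $G$ is indispensable; without it the perturbation would merely produce another extremal graph rather than a contradiction.
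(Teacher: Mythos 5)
Your proof is correct and follows essentially the same route as the paper: the upper bound $\nu(G[Z])\le s-p+1$ comes from $M_{s+1}$-freeness together with the fact that $Z$ is disconnected from $X\cup Y$ and $\nu(G[X\cup Y])=p-1$, and the lower bound is obtained by adding an edge inside $Y$ and contradicting the maximality of $G$ (the paper invokes edge-maximality directly, where you additionally split into the $\binom{p-1}{r-2}>0$ and $=0$ cases). You merely spell out the $P_{2p+1}$-freeness check that the paper dismisses as ``easy to verify,'' and your verification is accurate.
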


\begin{pf}
  By \cref{lmm:stability-2}, we know that $G[X\cup Y]=K_{p-1}\vee I_{n-t-p+1}$.
  If $\nu(G[Z])<s-p+1$, then adding an edge in $Y$ increases the number of edges by 1.
  Since there are no edges between $Z$ and $X\cup Y$, it is easy to verify that the resulting graph still does not contain $P_{2p+1}$ or $M_{s+1}$, which contradicts the maximality of edges in $G$.
\end{pf}

We claim that under the condition $\nu(G)\le s$,  the number of independent $2p$-cliques in the extremal graph should be as large as possible.
   Note that if $2p$ vertices are in $K_{2p}$ can contribute $\binom{2p}{r}$ $r$-cliques, while in $Y$ they can only contribute $2p\binom{p-1}{r-1}$ $r$-cliques.
  By \cref{lmm:Vandermonde}, we have
\begin{align*}
  \binom{2p}{r}
  =\sum_{i=0}^r\binom{p}{i}\binom{p}{r-i}
  >\binom{p}{1}\binom{p}{r-1}+\binom{p}{r-1}\binom{p}{1}
  > 2p\binom{p-1}{r-1}.
\end{align*}

   If we consider the case that $2p-1$ vertices are respectively in $K_{2p-1}$ and in $Y$, we can also obtain similar results.
   Therefore, it is desirable to have as many independent $2p$-cliques or $(2p-1)$-cliques in $G$ as possible.

  If there are no even components in $G[Z]$, then by \cref{cl:even} and \cref{cl:2p-1}, $G[Z]=q K_{2p-1}$ if $t=0$ and  $G[Z]=q K_{2p-1}\cup K_{2t+1}$ if $t\not=0$.
  This implies that $G=G_3.$

  Suppose that there is at least one even components in $G[Z]$.
  Then by \cref{cl:even}--\cref{cl:matching number of Z}, we know that $G[Z]$ is either
   $c K_{2p}$ if $d=0$ and
  $c K_{2p}\cup K_{2d+1}$ with $d+cp=s-p+1$ if $d\not=0$
   or $x K_{2p}\cup yK_{2p-1}$ for some $x\ge 1$ and $p-1\geq y\geq 2$ with $xp+y(p-1)=s-p+1$.

   For the former, we have $$G=G_4.$$

   For the later, we have $G[Z]=x K_{2p}\cup yK_{2p-1}$ and $\nu(G[Z])=s-p+1$.
   Recall that $c=\lfloor\frac{s-p+1}{p}\rfloor$.
   Hence, we have $cp+d=s-p+1=xp+y(p-1)$.
   Solving for $x$, we get $x=c-y+\frac{d+y}{p}$.
   Since $0\leq d\leq p-1$ and $y\leq p-1$, we have $\frac{d+y}{p}<2$.
   Note that $x\geq 1$, $y\geq 2$, so we conclude that $\frac{d+y}{p}=1$.
    Then $y=p-d$ and $x=c+d+1-p$.
   Hence $$G=(K_{p-1}\vee I_{n-p+1-(c+d+1-p)2p-(p-d)(2p-1)})\cup (c+d+1-p) K_{2p}\cup (p-d)K_{2p-1}=G_5.$$
In this case, in view of $2\leq y=p-d\leq p-1$, we have $1\leq d\leq p-2$.

\begin{mycase}{Case 2.}
  $e(G[X,Z])>0$.
\end{mycase}

  Without loss of generality, assume that there exists a vertex $u\in Z$ such that $uv_1\in E(G)$.
  From \cref{lmm:stability-2} (iii), we have $d(u)\geq p$.
  Since $|X|=p-1$ and $u$ has no neighbours in $Y$, there exists a neighbour of $u$ in $Z$, denoted as $u'$.
  If there exists a vertex $u''\in N(u')\backslash\{u,v_{1},\ldots,v_{p-1}\}$ such that $u'u''\in E(G)$, then according to \cref{lmm:stability-2} (ii), there exist vertices $w_1,\ldots,w_{p-1}\in Y$ such that $P=u''u'uv_1w_1\cdots v_{p-1}w_{p-1}$ is a path of length $2p+1$, contradicting the assumption that $G$ does not contain $P_{2p+1}$.
  Therefore, $u'$ has only one neighbour in $Z$, which is $u$.
  Since $u'\in Z$, we have $d(u')\geq p$, from which we can deduce $N(u')=\{u,v_{1},\ldots,v_{p-1}\}$.
  Similarly, we have $N(u)=\{u',v_{1},\ldots,v_{p-1}\}$.

  Let $Z'=Z\backslash\{u,u'\}$.
  If there is a vertex $z\in Z'$ such that $zv_i\in E(G)$, without loss of generality assuming that $v_i=v_{p-1}$.
  By a similar analysis as before, there must exists a vertex $z'\in Z'$ such that $zz'\in E(G)$.
  Then $P=z'zv_{p-1}w_{p-1}\ldots w_2v_1uu'$ is a copy of $P_{2p+1}$, a contradiction.
  Thus, we have $e(G[X,Z'])=0$.
  Moreover, we have $e(G[\{u,u'\}\cup X\cup Y,Z'])=0$.

  Suppose $C$ is a component of $G[Z']$.
  Similar to the discussion in Case 1, we know that $C$ is a clique with $p+1\leq|C|\leq 2p$.
  Furthermore, we have the following claim.

\begin{claim}
  Every component in $G[Z']$ is a clique of order $2p$.
\end{claim}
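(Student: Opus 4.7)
My plan is to show that any component $C$ of $G[Z']$ with $|C| < 2p$ admits a local modification that is $\{P_{2p+1}, M_{s+1}\}$-preserving and strictly increases $\mathcal{N}(K_r, G)$, contradicting the maximality of $G$. By the paragraph immediately preceding the claim, each component $C$ of $G[Z']$ is already a clique with $p+1 \leq |C| \leq 2p$, so it suffices to rule out (i) even $|C|$ with $|C| \leq 2p-2$ and (ii) odd $|C|$ with $|C| \leq 2p-1$.

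For (i), I would reuse the argument of \cref{cl:even} essentially verbatim: move a single vertex $w \in Y$ into $C$ by rerouting its edges from $X$ to $C$. The enlarged clique has odd order $|C|+1 \leq 2p-1$, hence contains no $P_{2p+1}$; its matching number remains $|C|/2$, so $\nu(G)$ is unchanged; and the $r$-clique count strictly increases by $\binom{|C|}{r-1} - \binom{p-1}{r-1} > 0$ because $|C| \geq p+1$.

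Case (ii) is the main obstacle: enlarging an odd clique by one vertex naively bumps the matching up by one, which must be compensated. The key idea is to recover the extra matching edge by \emph{dissolving the pair} $\{u, u'\}$ --- delete the edge $uu'$, so that $u$ and $u'$ become ordinary $Y$-type vertices (still joined to every vertex of $X$, with $Y \cup \{u, u'\}$ remaining independent). This drops $\nu(G[X \cup Y \cup \{u, u'\}])$ from $p$ to $p-1$, freeing one matching edge. I then promote $C$ to a larger clique by absorbing $Y$-vertices: one vertex if $|C| = 2p-1$, yielding $K_{2p}$, or two vertices if $|C| \leq 2p-3$, yielding $K_{|C|+2}$ of order at most $2p-1$. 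In both situations $\nu(G)$ is preserved exactly, no $P_{2p+1}$ is created because the enlarged clique has order at most $2p$, and only the $\{u,u'\}$-attachment and $C$ are touched. A short Pascal-identity check --- using $\binom{p+1}{r} - \binom{p-1}{r} = 2\binom{p-1}{r-1} + \binom{p-1}{r-2}$ for the cost of dissolving $\{u,u'\}$, and the telescoping $\binom{|C|+j}{r} - \binom{|C|}{r}$ for the gain from enlarging $C$ --- shows the net change in $\mathcal{N}(K_r,G)$ equals $\binom{2p-1}{r-1} - \binom{p}{r-1}$ when $|C| = 2p-1$ and $\binom{|C|+1}{r-1} + \binom{|C|}{r-1} - 2\binom{p-1}{r-1} - \binom{p-1}{r-2}$ when $|C| \leq 2p-3$; both expressions are strictly positive for $|C| \geq p+1$ and $r \geq 3$. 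This contradicts maximality, and the claim follows.
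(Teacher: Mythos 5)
Your proof is correct, but it takes a genuinely different route from the paper's. The paper handles all components $C$ of $G[Z']$ with $|C|\le 2p-1$ (even or odd alike) by a single uniform swap: reroute the neighbourhood of $u$ itself (currently the $p$-clique $\{u'\}\cup X$) into $C$. The point is that removing $u$ from the $K_{p-1}\vee(I_{|Y|}\cup K_2)$ side drops that side's matching number from $p$ to $p-1$, which absorbs the possible increase by one in the component side, so $\nu$ does not grow; the clique count then changes by $\binom{|C|}{r-1}-\binom{p}{r-1}>0$ in one line, with no parity case-split. You instead keep $u$ and $u'$ in place, delete the edge $uu'$ to free the matching edge, and push one or two $Y$-vertices into $C$ depending on whether $|C|=2p-1$ or $|C|\le 2p-3$ (after treating even components exactly as in \cref{cl:even}). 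Your matching bookkeeping and $P_{2p+1}$-freeness checks are right, and your net-gain formulas $\binom{2p-1}{r-1}-\binom{p}{r-1}$ and $\binom{|C|+1}{r-1}+\binom{|C|}{r-1}-2\binom{p-1}{r-1}-\binom{p-1}{r-2}$ are correctly computed and strictly positive in the meaningful range $r\le 2p$ (when $r>2p$ every $P_{2p+1}$-free graph has no $K_r$ at all, and both your argument and the paper's would have to fall back on the maximum-edge tie-break; neither spells this out, so this is not a gap specific to you). The trade-off: the paper's move buys brevity and uniformity, while your argument needs the even/odd and $|C|=2p-1$ versus $|C|\le 2p-3$ subcases and heavier Pascal bookkeeping, but it isolates the edge $uu'$ as the only obstruction and recycles the vertex-absorption move already used for even components.
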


\begin{pf}
  Consider $C$ as a component of $G[Z]$ with order at most $2p-1$.
  Now we change the neighbourhood of $u$ to $C$, and denoted by $G'$ the resulting graph.
  We will show that $\nu(G')\le \nu(G)$.
  Recall that $e(G[\{u,u'\}\cup X\cup Y,Z'])=0$.
  We have $\nu(G)=\nu(G[\{u,u'\}\cup X\cup Y])+\nu(G[Z'])=p+\nu(G[Z'])$.
  Clearly, we have $\nu(G')=\nu(G'[\{u'\}\cup X\cup Y])+\nu(G'[\{u\}\cup Z'])\le p-1 +\nu(G[Z'])+1=\nu(G)$.

  Since $|C|\le 2p-1$, $G'$ does not contain a path of length $2p+1$.
  During the transformation from $G$ to $G'$, we removed $\binom{p}{r-1}$ $r$-cliques and added $\binom{|C|}{r-1}$ $r$-cliques.
  Since $|C|\geq p+1$, we have $\binom{|C|}{r-1}>\binom{p}{r-1}$, which contradicts the maximality of $G$.
\end{pf}

  Now we have $G[X\cup Y\cup \{u,u'\}]=K_{p-1}\vee (I_{n-t-p+1}\cup K_2)$ and every component in $G[Z']$ is a clique of order $2p$.
  Let $s-p=c'p+d'$, where $0\leq d'\leq p-1$.
  As previously discussed, it is desirable to have as many independent $2p$-cliques in $G$ as possible.
  Hence, $G[Z']=c' K_{2p}$.
  Then, $$G=(K_{p-1}\vee (K_2\cup I_{n-p-1-2pc'}))\cup c' K_{2p},$$
  where $c'=\lfloor\frac{s-p}{p}\rfloor$.
  Moreover, we claim that $d'< p-1$ in this case.
  Otherwise, we have $d' = p-1$ and $\nu(G) = p + c'p = s - d' = s - (p-1)$.
  We remove $u$, $u'$, and $2p-2$ vertices from $Y$, and add a copy of $K_{2p}$, denoted as $G'$.
  During the transformation from $G$ to $G'$, we removed $2\binom{p-1}{r-1} + \binom{p-1}{r-2} + (2p-2)\binom{p-1}{r-1} = 2p\binom{p-1}{r-1} + \binom{p-1}{r-2}$ $r$-cliques and added $\binom{2p}{r}$ $r$-cliques.
  By \cref{lmm:Vandermonde}, we have
   \begin{align*}
    \binom{2p}{r}
    &=\sum_{i=0}^r\binom{p}{i}\binom{p}{r-i}\\[2mm]
    &\ge\binom{p}{1}\binom{p}{r-1}+\binom{p}{2}\binom{p}{r-2}+\binom{p}{r-1}\binom{p}{1} \\[2mm]
    &>2p\binom{p-1}{r-1}+\binom{p-1}{r-2}.
\end{align*}
  This implies that $G'$ contains more $r$-cliques than $G$.
  Note that $G'$ is $P_{2p+1}$-free and $\nu(G')=(p-1)+c'p+p=s$, which leads to a contradiction.
  Hence $d'< p-1$. Due to $s-(p-1)=c'p+(d'+1)$, we  have
   $c'=\lfloor\frac{s-(p-1)}{p}\rfloor=c$ and $d=d'+1$. Then $$G=(K_{p-1}\vee (K_2\cup I_{n-p-1-2pc}))\cup c K_{2p}=G_6.$$
   In this case,  $d=d'+1\geq 1$. The result follows.
\end{proof}

\bibliographystyle{abbrv}
\bibliography{reference.bib}

\end{document}